\documentclass[12pt, reqno]{amsart}
\usepackage{amssymb,latexsym,amsmath,amsfonts, amsthm}
\usepackage{latexsym}
\usepackage[numbers,sort&compress]{natbib}
\usepackage{color}
\usepackage[mathscr]{eucal}
\usepackage{comment}


\usepackage[linkcolor=blue, citecolor=black]{hyperref}
\hypersetup{colorlinks=true, urlcolor=blue}
\usepackage[nameinlink]{cleveref}
\usepackage{enumerate}

\voffset = -18pt
\hoffset = -27pt
\textwidth = 5.8in
\textheight = 8.9in
\numberwithin{equation}{section}

\theoremstyle{definition}
\newtheorem{definition}{Definition}[section]
\newtheorem{example}[definition]{Example}

\newtheorem{remark}[definition]{Remark}
\theoremstyle{plain}
\newtheorem{theorem}[definition]{Theorem}
\newtheorem{lemma}[definition]{Lemma}

\newtheorem{result}[definition]{Result}

\newtheorem{corollary}[definition]{Corollary}
\newtheorem{question}[definition]{Question}

\newtheorem{conjecture}[definition]{Conjecture}

\newcommand{\zbar}{\overline{z}}

\newcommand{\bdy}{\partial}
\newcommand{\Om}{\Omega}

\newcommand{\dsc}{\mathbb{D}}

\newcommand{\smoo}{\mathcal{C}}

\newcommand{\poly}{\mathscr{P}}


\newcommand{\rl}{{\sf Re}}
\newcommand{\imag}{{\sf Im}}

\newcommand\ba[1]{\overline{#1}}
\newcommand\hull[1]{\widehat{#1}}

\newcommand{\CC}{\mathbb{C}^2}
\newcommand{\cplx}{\mathbb{C}}

\newcommand{\rea}{\mathbb{R}}


\newcommand{\Gr}{{\sf Gr}}

\begin{document}
	\title[Compacts lying in certain Levi-flat hypersurfaces]{Polynomial convexity of compacts that lies in certain Levi-flat hypersurfaces in $\mathbb{C}^2$}
	\author{Sushil Gorai and Golam Mostafa Mondal}
	\address{Department of Mathematics and Statistics, Indian Institute of Science Education and Research Kolkata,
		Mohanpur -- 741 246}
	\email{sushil.gorai@iiserkol.ac.in, sushil.gorai@gmail.com}
	
	\address{Department of Mathematics, Indian Institute of Science Education and Research Pune,
		Pune -- 411 008}
	
	\email{golammostafaa@gmail.com}
	\thanks{}
	\keywords{Polynomial convexity; totally real disc, plurisubharmonic functions}
	\subjclass[2010]{Primary: 32E20}

	\date{\today}

	\begin{abstract}
		In this paper,
		we first prove that the totally real discs lying in certain Levi flat hypersurfaces are polynomially convex. As applications we prove that the totally real discs lying in the boundary of certain polynomial polyhedra are polynomially convex. We also provide an if and only if condition for polynomial convexity of totally real discs lying in the boundary of Hartog's triangle.
		We also provide sufficient conditions on general compact subsets lying on those hypersurfaces for polynomial convexity. 
	\end{abstract}

	\maketitle
	\section{Introduction}
	
	Let $K$ be a compact subset of the complex Euclidean space $\cplx^{n}.$ Let $\smoo(K)$ be the space of continuous complex valued functions on $K$ and $\poly(K)$ be the space of all uniform limits of polynomials on $K$. The polynomial convex hull of a compact set $K$ is denoted by $\widehat{K}$ and define by $\widehat{K}:=\{z\in\cplx^{n}:|p(z)|\le\sup_{K}|p|, \forall_{p}\in \cplx[z_1,\cdots,z_{n}]\}.$ $K$ is said to be polynomially convex if $\widehat{K}=K$. We say a compact $K\subset\cplx^n$ is rationally convex set if $K=\{z\in\cplx^{n}:|f(z)|\le\sup_{K}|f|, \forall f\in Rat(K)\}$, where $Rat(K)$ is the collection of all rational functions in $\cplx^n$ with poles outside $K$.
	
	One of the fundamental question in the theory of uniform algebras is to characterize compact subset $K$ of $\cplx^n$ for which 
	\begin{equation}\label{E:approx}
		\poly(K)=\smoo(K).
	\end{equation}
	Note that $\poly(K)$ and $\smoo(K)$ both are commutative Banach algebra and if $\poly(K)=\smoo(K)$ then their maximal ideal spaces are also same. From the theory of uniform algebra, we know that the maximal ideal space of $\smoo(K)$ can be identified with $K,$ and that of $\poly(K)$ can be identified with $\hat{K}.$ From the above discussion we can see that polynomial convexity arises naturally in the study of uniform approximation. Lavrentiev \cite{Lv} showed that for $K\subset \cplx,$ $\poly(K)=\smoo(K)$ if and only if $\hull{K}=K$ and $int(K)=\emptyset.$ No such characterization is known in the higher dimension, and it is generally challenging to decide which compacts of $\cplx^n$ satisfy (\ref{E:approx}). So it is natural to consider the problem for particular classes of compact sets. In this paper, we consider compact subsets of totally real submanifolds of $\cplx^2$.
	
	\par	Recall that a $C^{1}$-smooth submanifold $M$ of $\cplx^{n}$ is said to be \textit{totally real} at $p\in M$ if $T_{p}M\cap iT_{p}M=\{0\},$ where the tangent space $T_{p}M$ is viewed as a real linear subspace of $\cplx^{n}.$  The manifold $M$ is said to be totally real if it is totally real at all points of $M$. Totally real submanifolds play an important role because of the following reasons.
	\begin{enumerate}[(i)]
		\item Such manifolds are locally polynomially convex (see \cite{NirWell69}).
		\item  Let $K$ be a compact subset of a totally real manifold $M.$ Then any continuous function on $K$ can be uniformly approximated by a holomorphic (in a neighborhood of $K$) function on $K.$ If $K$ is polynomially convex, then the holomorphic function can be replaced by the polynomial (see \cite{HoW68}). Hence, \eqref{E:approx} holds in this case.
	\end{enumerate}
	For polynomially convex set $K$, there are several papers, for instance see \cite{AIW1, AIW2, OPW, St1, W1},
	that describe situations when \eqref{E:approx} holds. 
	\par	A closed \textit{totally real disc}\index{Totally real disc} is a compact subset of a $\smoo^1$ totally real submanifold, diffeomorphic to the closed planar disc. A totally real disc in $\cplx^2$ may not be polynomially convex as the following example shows.
	\begin{example}[Wermer \cite{HoW68}]\label{Exm:Wermer_Example}
		Let $M:=\{(z, f(z))\in \CC : z\in \cplx\},$ where
		$$f(z)=-(1+i)\bar{z}+iz\bar{z}^2+z^2\bar{z}^3.$$
		Then $M$ is totally-real. Let
		$K:=\{(z, f(z))\in \CC : z\in \overline{\dsc}\}\subset M.$ Then $K$ is not polynomially
		convex. 
	\end{example}
	
	Studies of polynomial convexity of totally-real discs which are of graph form is done by Wermer \cite{W1}, O'Farrell and Preskenis \cite{OP1,OP2} and Duval \cite{JDuval88}. Duval's theorem is particularly interesting. We mention it here:
	\begin{result}[Duval]
		Let $M=\{(z,f(z))\in\cplx^2: z\in\ba{\dsc}\}$, where $f$ is $\smoo^1$-smooth in a neighbourhood of $\dsc$. Assume $\left|\dfrac{\bdy f}{\bdy\zbar}(a)\right|>\left|\dfrac{\bdy f}{\bdy z}(a)\right|$ for all $a\in\ba{\dsc}$. Then $M$ is polynomially convex.
	\end{result}
	\noindent The study of totally real discs, which are not, in general, of graph form,  appeared in the context of removable singularities of CR-functions. A deep result in this direction is due to J\"{o}ricke \cite{Joric88}. 
	\begin{result}[J\"{o}ricke]
		Any $\smoo^2$-smooth totally real disc in the boundary of the unit ball in $\cplx^2$ is removable.
	\end{result}
	\noindent Combining a result due to Lupacciolu and Stout (see \cite{Sto93}) it is evident that any $\smoo^2$-smooth totally real disc in $\bdy \mathbb{B}^2$ is polynomial convex. 
	Stout mentioned in \cite{Sto93} that a similar argument will work for the $\smoo^2$-smooth totally real discs that lie in the boundary of a strictly pseudoconvex domain $\Omega$ such that $\ba{\Omega}$ is polynomially convex. 
	In \cite{Alex91} Alexander proved the following result: 
	\begin{result}[Alexander]\label{R:Alexander}
		Any $\smoo^2$-smooth totally real disc contained in $\{(z_1,z_2)\in \cplx^2:|z_1|=1\}$ is polynomially convex
	\end{result}
	\noindent After the works \cite{Joric88} and \cite{Alex91} were done, J\"{o}ricke, in the problem book \cite{HavNik06}, asked the following question:
	\begin{question}
		Which closed, totally real discs in $\cplx^2$ are polynomially convex?
	\end{question}
	\noindent The hypersurface that Alexander considered in \cite{Alex91} is Levi flat. This motivates us to look at some more examples of Levi-flat hypersurfaces and totally real discs lying in them.
	Recall that A hypersurface is said to be Levi flat if its Levi form vanishes identically.
	We also give partial answers to J\"{o}ricke's question in some cases.
	Our first result demonstrates a class of totally real discs in $\cplx^2$, lying in certain nonsingular Levi-flat hypersurfaces, that are polynomially convex. These generalizes Alexander's result. More precisely, we present:
	
	\begin{theorem}\label{T:Polynomial}
		Let $\Omega$ be a Runge domain in $\cplx^2$  and $h$ be a holomorphic function on $\Omega.$ 
		Let $M:=\{ z\in \Omega:|h(z)|=1\}$ with $dh(z)\not= 0$ for all $z\in M.$ Then every totally real 
		disc in $M$ is polynomially convex. 
	\end{theorem}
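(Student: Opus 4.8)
The plan is to play the Runge hypothesis, which lets one pass freely between polynomials and functions holomorphic on $\Omega$, against the geometric constraints that total reality imposes inside the Levi‑flat foliation $M=\bigcup_{|c|=1}L_c$, where $L_c:=\{z\in\Omega:h(z)=c\}$. Fix a totally real disc $D\subseteq M$ and set $K:=\widehat D$. Since $\Omega$ is Runge, $K\subseteq\Omega$ and every function $f$ holomorphic on a neighbourhood of $K$ (in particular $h$, and any branch of $\log h$ where it exists) is a uniform limit of polynomials $p_n$ on $K$; hence $\sup_K|f|=\sup_D|f|$, and the functions $\log|p_n|$ being plurisubharmonic on $\mathbb C^2$ one also gets the maximum principle $\sup_K\log|f|=\sup_D\log|f|$ whenever $f$ is moreover nonvanishing near $K$. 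Applying $\sup_K|f|=\sup_D|f|$ to $f=h$ gives $|h|\le1$ on $K$.

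The crucial geometric input is that $h|_D\colon D\to S^1$ is a submersion. Indeed, at $p\in D$ the totally real $2$‑plane $T_pD$ lies in the $3$‑dimensional $T_pM$, and it meets the complex tangent line $T_p^{\mathbb C}M=T_pL_{h(p)}=\ker dh_p$ in a subspace that is simultaneously totally real and contained in a complex line, hence of real dimension $\le1$; a dimension count in $T_pM$ then forces this intersection to be exactly $1$‑dimensional, so $d(h|_D)_p$ has rank one. Consequently the lift $\widetilde h\colon D\to\mathbb R$ of $\arg(h|_D)$ (which exists since $D$ is simply connected) has no critical point, so each fibre $D_c:=D\cap L_c$ is a finite disjoint union of compact arcs. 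Such a set lies in the open Riemann surface $L_c=\{h=c\}$ with connected complement there, hence is $\mathcal O(L_c)$‑convex; since $L_c$ is a closed hypersurface of the Stein Runge domain $\Omega$, this yields $\widehat{D_c}=D_c$ — every fibre of $h$ over $D$ is polynomially convex.

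Suppose first that $h(D)$ is a proper subarc of $S^1$. Then $h(D)$ is polynomially convex in $\mathbb C$, and since $h(K)\subseteq\widehat{h(D)}=h(D)\subseteq S^1$ we get $|h|\equiv1$ on $K$; thus $K\subseteq M$ and $K=\bigsqcup_{c\in h(D)}(K\cap L_c)$. On a neighbourhood of $K$ the branch $g:=\log h$ is well defined, with $g(K)$ contained in the segment $E:=i\,\widetilde h(D)$, which is polynomially convex. A localisation argument now finishes this case: given a polynomial $q$ and a point $p\in K\cap L_c$, choose a one‑variable polynomial $r$ with $r(g(p))=1$, $|r|\le1$ on $E$, and $|r|$ uniformly small on the part of $E$ away from $g(p)$, and apply $\sup_K=\sup_D$ to $q\cdot r(g)^{N}$; letting $N\to\infty$ gives $|q(p)|\le\sup_{D_c}|q|$, so $p\in\widehat{D_c}=D_c$. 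Hence $K\cap L_c=D_c$ for every $c$, and $K=\bigsqcup_c D_c=D$.

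Finally, the general case should be reduced to the previous one by cutting. Writing $\widetilde h(D)=[m,M]$ and choosing $m=a_0<\dots<a_N=M$ with $a_j-a_{j-1}<2\pi$, the level sets $\widetilde h^{-1}(a_j)$ are finite unions of arcs meeting $\partial D$ in their endpoints; cutting $D$ along them decomposes $D$ into finitely many totally real sub‑discs, on each of which $h$ maps into a proper arc, so the previous paragraph applies and each piece is polynomially convex. One then wants $\widehat D=\bigcup(\text{pieces})=D$, obtained by gluing; since consecutive pieces are separated by a level set of the single‑valued function $g=\log h$ (defined near $D$, because $D$ is contractible and $h$ is nonvanishing there), a Kallin‑type separation using $g$ is the natural tool. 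This last step is the main obstacle: when $h$ wraps all the way around $S^1$ the projection argument no longer confines $K$ to $M$ and $g$ is no longer defined near $K$, so one must either choose the cutting hypersurfaces and the separating functions with care, or rule out directly that $h$ vanishes on $K$ — for instance by a winding‑number (Arens–Royden‑type) argument exploiting that $h|_{\partial D}$ has winding number $0$ about the origin. Once $K\subseteq M$ and $h(K)=h(D)$ are in hand, the fibre lemma of the third paragraph closes the proof.
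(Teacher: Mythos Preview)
Your argument is genuinely incomplete, and you say so yourself: the case $h(D)=S^1$ is left open, and neither of your suggested fixes (cutting with Kallin-type gluing, or an Arens--Royden winding-number argument to force $h\neq0$ on $\widehat D$) is carried out. The cutting approach in particular runs into exactly the obstacle you name: once $h(D)=S^1$ you no longer know that $\widehat D\subset M$, so the single-valued branch $g=\log h$ on which your separation would rest need not exist near $\widehat D$.

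The paper's proof avoids this case distinction entirely, and the device it uses is worth knowing. The fibre step is essentially yours (the paper frames it via the characteristic foliation and Poincar\'e--Bendixson to exclude closed leaves, but your submersion/lift argument to get arcs is equivalent and arguably cleaner). The difference is in how one passes from ``every fibre $D_c$ is polynomially convex'' to ``$D$ is polynomially convex''. The paper uses two black boxes in tandem:
\begin{itemize}
\item a peak-point lemma of Samuelsson--Wold: if $\alpha\in h(D)$ is a peak point for $\mathscr P(h(D))$, then $h^{-1}\{\alpha\}\cap\widehat D=\widehat{h^{-1}\{\alpha\}\cap D}$;
\item a criterion of Stolzenberg: if $X$ is simply-coconnected and some $f$ holomorphic near $\widehat X$ satisfies $f(X)\cap f(\widehat X\setminus X)=\emptyset$, then $X$ is polynomially convex.
\end{itemize}
The point you missed is that \emph{every} point of \emph{any} compact subset of $S^1$ --- including $S^1$ itself --- is a peak point for $\mathscr P$ on that set (take $r(w)=\tfrac12(1+\bar c\,w)$). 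So the peak-point lemma applies to every $\alpha\in h(D)$ regardless of whether $h(D)$ is a proper arc, giving $h^{-1}\{\alpha\}\cap\widehat D=D_\alpha$ for all $\alpha\in h(D)$; hence $h(D)\cap h(\widehat D\setminus D)=\emptyset$, and Stolzenberg's criterion (with $X=D$ contractible, $f=h$) finishes. Your localisation argument in the proper-arc case is in fact a hand-made instance of the peak-point lemma; routing it through $\log h$ is what artificially introduced the proper-arc restriction. Had you peaked with $r(h)$ directly on $\overline{\mathbb D}\supset h(\widehat D)$ rather than with $r(g)$ on a segment, you would have obtained $h^{-1}\{\alpha\}\cap\widehat D\subset D$ for every $\alpha\in h(D)$ without any arc hypothesis --- and then the remaining issue, that $h(\widehat D)$ might be strictly larger than $h(D)$, is precisely what Stolzenberg's simply-coconnected criterion disposes of.
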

	
	\noindent We now mention a corollary which is somewhat interesting.
	\begin{corollary}\label{T:Holomorphic}
		Let $\Omega$ be a Runge domain in $\cplx^2$  and $g$ be a holomorphic function on $\Omega.$ 
		Let $M:=\{ z\in \Omega:\rl g(z)=0\}$ with $dg(z)\not= 0$ for all $z\in M.$ Then every totally real $\smoo^2$-smooth
		smooth disc in $M$ is polynomially convex. 
	\end{corollary}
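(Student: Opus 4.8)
The plan is to deduce this directly from Theorem \ref{T:Polynomial} by an exponential substitution. Given the holomorphic function $g$ on the Runge domain $\Omega$, I would set $h := e^{g}$, which is again holomorphic on $\Omega$. Since $|h(z)| = |e^{g(z)}| = e^{\rl g(z)}$, and $e^{t} = 1$ for real $t$ precisely when $t = 0$, one obtains the exact identity of sets
\[
\{z\in\Omega : |h(z)| = 1\} = \{z\in\Omega : \rl g(z) = 0\} = M .
\]
Thus $M$ is realized as a modulus-one level set of a holomorphic function on $\Omega$, which is the shape required by Theorem \ref{T:Polynomial}.

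Next I would verify the nonvanishing hypothesis $dh(z)\neq 0$ on $M$. Since $dh = e^{g}\,dg$ and the factor $e^{g(z)}$ never vanishes, we have $dh(z) = 0$ if and only if $dg(z) = 0$. By hypothesis $dg(z)\neq 0$ for every $z\in M$, hence $dh(z)\neq 0$ for every $z\in M$. Therefore the triple $(\Omega, h, M)$ satisfies all the hypotheses of Theorem \ref{T:Polynomial}.

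Consequently Theorem \ref{T:Polynomial} applies and yields that every totally real disc lying in $M$ is polynomially convex; in particular every totally real $\smoo^{2}$-smooth disc in $M$ is polynomially convex (indeed the theorem already covers $\smoo^{1}$-smooth discs), which is the assertion of the corollary. There is essentially no obstacle to overcome here: the only points that need checking are that passing to $h = e^{g}$ turns the real-part hypersurface into a level set of the modulus while preserving both holomorphicity on the Runge domain $\Omega$ and the rank-one condition on the differential, and both are immediate from the properties of the exponential.
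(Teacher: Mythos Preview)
Your proposal is correct and matches the paper's own proof essentially verbatim: the paper also sets $h:=e^{g}$, notes that $\{|h|=1\}=\{\rl g=0\}=M$, and invokes Theorem~\ref{T:Polynomial}. Your additional check that $dh=e^{g}dg\neq 0$ on $M$ is a detail the paper leaves implicit, so if anything your write-up is slightly more complete.
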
	
	Here we give another class of domains in $\cplx^2$, which lies outside the class of strictly pseudoconvex domains for which the totally real discs are polynomially convex.
	If $p_{1},\cdots,p_{l}$ are holomorphic polynomials in $\cplx^n,$ then 
	\begin{align*}
		\mathfrak{D}_{l}:=\{z\in \Omega:|p_1(z)|< 1,\cdots,|p_{l}(z)|< 1\}, l\in \mathbb{N},
	\end{align*}
	is known as a polynomial polyhedron in $\cplx^2$. Let us define
	\begin{itemize}
		\item $\Sigma_{j}:=\{z\in \cplx^n:|p_{j}(z)|=1\}$;
		\item $\Sigma_{J_k}:=\Sigma_{j_1}\cap\cdots\cap\Sigma_{j_k},$ where $J_k=\{j_1,\cdots,j_k\}$.
	\end{itemize}
	\noindent Clearly, the topological boundary $\partial\mathfrak{D}_{l}$ of $\mathfrak{D}_{l}$ is contained in $\cup_{j}\Sigma_{j}.$ The polynomial polyhedron  $\mathfrak{D}_{l}$ is said to be \textit{complex non-degenerate} if for any increasing collection $j_1<j_2<\cdots <j_k,k\le n,$ 
	\begin{align*}
		dp_{j_1}\wedge\cdots\wedge dp_{j_k}(z)\ne 0 ~\forall z\in \Sigma_{J_k}.
	\end{align*}
	
	\begin{corollary}\label{coro-polyhedron}
		Any totally real disc that lies in the non-singular part of the boundary of a complex non-degenerate polynomial polyhedron in $\cplx^2$ is polynomially convex.
	\end{corollary}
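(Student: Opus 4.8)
The plan is to reduce Corollary~\ref{coro-polyhedron} directly to Theorem~\ref{T:Polynomial}; the crux is to show that a totally real disc lying in the non-singular part of $\partial\mathfrak{D}_{l}$ is automatically contained in a single hypersurface $\Sigma_{j_0}=\{z\in\cplx^2:|p_{j_0}(z)|=1\}$.

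First I would identify the non-singular part of $\partial\mathfrak{D}_{l}$. Each $z\in\partial\mathfrak{D}_{l}$ satisfies $|p_m(z)|\le 1$ for every $m$ and $|p_m(z)|=1$ for at least one $m$, and since $n=2$ it lies in $\Sigma_{J_k}$ for some $k\in\{1,2\}$. At a point lying in exactly one $\Sigma_{j}$, the $k=1$ case of complex non-degeneracy gives $dp_{j}(z)\ne 0$, so $\partial\mathfrak{D}_{l}$ coincides near $z$ with the smooth real hypersurface $\{|p_{j}|=1\}$; at a point of some $\Sigma_{j_1}\cap\Sigma_{j_2}$ the boundary is a corner formed by two smooth pieces meeting along the $2$-dimensional set $\Sigma_{j_1}\cap\Sigma_{j_2}$, hence is genuinely singular there. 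Therefore the non-singular part equals $\mathcal{N}=\bigsqcup_{j}\mathcal{N}_{j}$ with $\mathcal{N}_{j}:=\{z\in\partial\mathfrak{D}_{l}:|p_{k}(z)|<1\ \text{for all}\ k\ne j\}$: each $\mathcal{N}_{j}$ is open in $\partial\mathfrak{D}_{l}$ since it is cut out by finitely many strict inequalities, and $|p_{j}|=1$ is forced on it, so the index map $z\mapsto j$ is locally constant on $\mathcal{N}$.

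Now let $\Delta$ be a totally real disc contained in $\mathcal{N}$. Since $\Delta$ is connected (it is diffeomorphic to a disc) and the $\mathcal{N}_{j}$ are pairwise disjoint open subsets of $\mathcal{N}$, there is a single index $j_0$ with $\Delta\subset\mathcal{N}_{j_0}\subset\Sigma_{j_0}$. Then I would invoke Theorem~\ref{T:Polynomial} with $\Omega=\cplx^2$ --- a Runge domain, since every entire function is a locally uniform limit of its Taylor polynomials --- with $h=p_{j_0}$ and $M=\{z\in\cplx^2:|h(z)|=1\}=\Sigma_{j_0}$; the hypothesis $dh(z)\ne 0$ for all $z\in M$ is precisely the $k=1$ instance of complex non-degeneracy applied to $p_{j_0}$. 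Theorem~\ref{T:Polynomial} then yields that every totally real disc in $M$, in particular $\Delta$, is polynomially convex. I do not expect a real obstacle here: the only non-routine ingredients are the bookkeeping that identifies $\mathcal{N}$ with $\bigsqcup_{j}\mathcal{N}_{j}$ and the connectedness argument placing $\Delta$ in a single $\Sigma_{j_0}$; after that the corollary follows at once from Theorem~\ref{T:Polynomial}.
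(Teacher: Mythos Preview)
Your proposal is correct and is exactly the reduction the paper has in mind: the corollary is stated immediately after Theorem~\ref{T:Polynomial} and no separate proof is given, so the intended argument is precisely to observe that a totally real disc in the non-singular part of $\partial\mathfrak{D}_l$ sits inside a single $\Sigma_{j_0}$ and then apply Theorem~\ref{T:Polynomial} with $\Omega=\cplx^2$ and $h=p_{j_0}$. Your identification of the non-singular locus as $\bigsqcup_j\mathcal{N}_j$ (using the $k=2$ non-degeneracy to rule out smoothness along $\Sigma_{j_1}\cap\Sigma_{j_2}$) and the connectedness argument are the details the paper leaves to the reader.
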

	
	\noindent The following is also a simple corollary of \Cref{T:Polynomial}. It deals with compact subsets that lies in the zero set of pluriharmonic functions. This needs simple connectedness of the domain as an assumption.
	
	\begin{corollary}\label{T:Pluriharmonic}
		Let $\Omega$ be a simply connected Runge domain in $\cplx^2$  and $h$ be a real valued pluriharmonic function on $\Omega.$ 
		Let $M:=\{ z\in \Omega: h(z)=0\}$ with $dh(z)\not= 0$ for all $z\in M.$ Then every totally real $\smoo^2$-smooth disc in $M$ is polynomially convex. 
	\end{corollary}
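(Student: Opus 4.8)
The plan is to deduce the corollary from \Cref{T:Polynomial} by replacing the pluriharmonic function $h$ with a holomorphic function whose modulus cuts out the very same hypersurface $M$. Since $h$ is real-valued and pluriharmonic, it is locally the real part of a holomorphic function; these local holomorphic primitives patch into a single holomorphic function $g$ on all of $\Omega$ with $\rl g = h$ exactly when the (a priori multivalued) pluriharmonic conjugate of $h$ is globally single-valued. Because $\Omega$ is simply connected we have $H^1(\Omega;\rea)=0$, so this single-valuedness holds and such a $g$ exists. This is the one and only place where the simple-connectedness hypothesis enters.

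Next I would set $F:=e^{g}$, which is holomorphic and nowhere vanishing on $\Omega$. From $|F|=e^{\rl g}=e^{h}$ it follows that $|F(z)|=1$ if and only if $h(z)=0$, hence $M=\{z\in\Omega:|F(z)|=1\}$. To check the nonvanishing-differential hypothesis, note that $dF=F\,dg$ vanishes precisely where $dg$ does, since $F$ has no zeros; and because $g$ is holomorphic with $\rl g=h$, we have $\partial_{z_j}h=\tfrac12\,\partial_{z_j}g$ for $j=1,2$, so that $dh(z)=0$ if and only if $dg(z)=0$ if and only if $dF(z)=0$. Consequently $dF(z)\neq 0$ for every $z\in M$.

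At this point $\Omega$ is a Runge domain, $F$ is holomorphic on $\Omega$, and $M=\{z\in\Omega:|F(z)|=1\}$ with $dF\neq 0$ throughout $M$, so \Cref{T:Polynomial} applies directly and yields that every totally real disc in $M$ — in particular every $\smoo^2$-smooth one — is polynomially convex.

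I do not anticipate a genuine obstacle: all of the substance lies in \Cref{T:Polynomial}, and the corollary is essentially a change of viewpoint. The only steps that merit a careful sentence are the global existence of the holomorphic primitive $g$ via simple connectedness, and the elementary verification that $dh$ and $dF$ vanish on the same subset of $M$; both are routine once set up.
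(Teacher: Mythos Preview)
Your argument is correct and is essentially the paper's own: the paper first invokes \Cref{T:Holomorphic} (taking a holomorphic $P$ with $\rl P=h$, available by simple connectedness) and then, in the proof of \Cref{T:Holomorphic}, passes to $e^{P}$ to land in the setting of \Cref{T:Polynomial}; you have merely composed those two reductions into a single step. Your explicit check that $dF\neq 0$ on $M$ is a welcome detail that the paper leaves implicit.
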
	
	Another situation which quite different from the above is as follows:
	\begin{theorem}\label{T:Smooth_Cross_C}
		If $h:\cplx\to \mathbb{R}$ be a smooth map and $E_{h}:=\{x+iy\in \cplx:h(x,y)=0\}.$ If $dh\ne 0$ on $E_{h},$ then every $\smoo^2$-smooth totally real disc in $M:=E_{h}\times \cplx$ is polynomially convex.	
	\end{theorem}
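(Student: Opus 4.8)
The plan is to project everything down to the $z_1$-axis and then argue with representing measures. Write $\pi\colon\CC\to\cplx$ for $\pi(z_1,z_2)=z_1$. Since $dh\neq0$ on $E_h$, the curve $E_h$ is a smooth embedded $1$-submanifold of $\cplx$, and $M=E_h\times\cplx$ is a Levi-flat hypersurface whose leaves are the complex lines $L_p:=\{p\}\times\cplx$, $p\in E_h$. The first point I would establish is that $\pi|_D$ is a submersion: if at some $x\in D$ the map $d\pi|_{T_xD}\colon T_xD\to T_{\pi(x)}E_h\cong\rea$ were not surjective it would be zero, so $T_xD\subseteq\ker\big(d\pi|_{T_xM}\big)=T_xL_{\pi(x)}=\{0\}\times\cplx$, and, being $2$-real-dimensional, $T_xD$ would equal the complex line $\{0\}\times\cplx$ — contradicting total reality of $D$ at $x$. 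Hence $\gamma_0:=\pi(D)$ is a compact connected $1$-submanifold of $E_h$, i.e.\ a point, a compact arc, or a smooth Jordan curve; the point case is impossible, since then $D$ would be a disc inside a single complex line, having interior there. Next I would record that for each $p\in\gamma_0$ the fibre $F_p:=D\cap L_p=(\pi|_D)^{-1}(p)$ is a compact $1$-submanifold of $D$ (every value of the submersion is regular) with no circle components — a circle would bound a subdisc of $D$ on which $\pi$ is constant, which is impossible — hence, viewed inside $L_p\cong\cplx$, a finite disjoint union of compact smooth arcs; such a set has empty interior and connected complement in $\cplx$, so it is polynomially convex, and since $L_p$ is a coordinate line $\widehat{F_p}=F_p$ in $\CC$.

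The core of the proof is then the following. Take $q=(q_1,q_2)\in\widehat{D}$ and a probability measure $\mu$ on $D$ representing $q$ (so $P(q)=\int_D P\,d\mu$ for every polynomial $P$); $\mu$ is in particular a Jensen measure, $\log|P(q)|\le\int_D\log|P|\,d\mu$. Pushing forward, $\nu:=(\pi)_*\mu$ is a probability measure on $\gamma_0$ which is a Jensen measure for $q_1$ relative to one-variable polynomials, and (since $Q\circ\pi$ is a polynomial for every one-variable polynomial $Q$) $q_1\in\pi(\widehat{D})\subseteq\widehat{\pi(D)}=\widehat{\gamma_0}$. I claim that if $q_1\in\gamma_0$ then $\nu=\delta_{q_1}$: if $\gamma_0$ is an arc then $\poly(\gamma_0)=\smoo(\gamma_0)$ by Lavrentiev's theorem \cite{Lv}, and if $\gamma_0$ is a Jordan curve then $\poly(\gamma_0)$ is the Jordan-domain algebra, isomorphic via a Riemann map to the disc algebra, in which every boundary point is a peak point; in both cases there is a zero-free $f\in\poly(\gamma_0)$ with $f(q_1)=1=\max_{\gamma_0}|f|$ and $|f|<1$ on $\gamma_0\setminus\{q_1\}$, and writing $f$ as a uniform limit of eventually zero-free polynomials and passing to the limit in Jensen's inequality gives $0=\log|f(q_1)|\le\int_{\gamma_0}\log|f|\,d\nu\le0$, hence $\nu=\delta_{q_1}$. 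Consequently $\mu$ is carried by $(\pi|_D)^{-1}(q_1)=F_{q_1}$, so $q\in\widehat{F_{q_1}}=F_{q_1}\subseteq D$. Since for an arc (or point) $\widehat{\gamma_0}=\gamma_0$, this already yields $\widehat{D}=D$ whenever every component of $E_h$ meeting $\pi(D)$ is noncompact — in particular for graphs, lines, parabolas, etc. — and, for a Jordan-curve $\gamma_0$, whenever $q_1$ happens to lie on $\gamma_0$.

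The hard part, I expect, is the one remaining case: $\gamma_0$ a smooth Jordan curve bounding a region $\Om_0$, with the hull point $q$ lying over the interior, $q_1\in\Om_0$. Here $\widehat{\gamma_0}=\overline{\Om_0}\supsetneq\gamma_0$ and the localization above breaks down; one must instead show $\widehat{D}\subseteq M$, i.e.\ that the hull does not protrude over $\Om_0$. A plurisubharmonic barrier cannot achieve this — the restriction of any nonnegative plurisubharmonic function vanishing on $M$ to a disc $\overline{\Om_0}\times\{c\}$ would vanish identically by the maximum principle — so a genuinely different argument is needed. When $\gamma_0$ is a round circle, $\widehat{D}\subseteq M$ (in fact $\widehat{D}=D$) is exactly Result~\ref{R:Alexander}, applied after the affine polynomial automorphism of $\CC$ carrying $\gamma_0$ to the unit circle. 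For a general smooth Jordan curve I would try to reproduce the mechanism behind Alexander's theorem: a hull point $q$ with $q_1\in\Om_0$ would, through the representing measure $\mu$ disintegrated over the harmonic measure $(\pi)_*\mu$ on $\gamma_0$, produce an analytic structure inside the Levi-flat set $\overline{\Om_0}\times\cplx$ attached to $D$, which is incompatible with $D$ being totally real (hence locally polynomially convex). Once $\widehat{D}\subseteq M$ is established, $q_1$ necessarily lies on $\gamma_0$, and the fibre argument of the previous paragraph ($\nu=\delta_{q_1}$, $q\in F_{q_1}\subseteq D$) applies and finishes the proof.
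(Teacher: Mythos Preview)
Your argument runs parallel to the paper's up to the point you call the ``hard case''. The submersion property of $\pi|_D$ and the absence of closed fibre components are exactly what the paper obtains from the characteristic foliation together with Poincar\'{e}--Bendixson (\Cref{L:Charc_Foliation} and Step~I, Case~II of the proof of \Cref{T:Polynomial}); your representing-measure localisation at peak points of $\gamma_0$ is the content of the Samuelsson--Wold lemma the paper quotes (\Cref{R:Hull_fiber}). By the end of your second paragraph you have established that each fibre $F_p$ is polynomially convex and that $\pi(D)\cap\pi(\widehat{D}\setminus D)=\emptyset$.

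The gap is precisely where you say it is: $\gamma_0$ a Jordan curve and $q_1\in\Omega_0$. You propose to handle it by first proving $\widehat{D}\subseteq M$ via an Alexander-type analytic-structure argument, but you do not carry this out, and the paper does not take this route either. It finishes instead with Stolzenberg's theorem (\Cref{R:Hull_Img X}): if $X\subset\cplx^n$ is compact and simply-coconnected and some $f$ holomorphic in a neighbourhood of $\widehat{X}$ satisfies $f(X)\cap f(\widehat{X}\setminus X)=\emptyset$, then $X$ is polynomially convex. With $X=D$ and $f=\pi$ (entire) both hypotheses are already in your hands --- $D$ is a disc, so $\check{H}^1(D;\mathbb{Z})=0$, and the disjointness of $\pi$-images is exactly what your peak-point paragraph proved. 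So the missing ingredient is not a plurisubharmonic barrier or an analytic-structure argument over $\Omega_0$ but this one citation; Stolzenberg's result bypasses the need to confine $\widehat{D}$ to $M$ altogether.
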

	\noindent In \Cref{T:Holomorphic} if we take $\Om=\cplx^2$ and $g(z,w)=iz$ then we get:
	\begin{corollary}\label{Rmk:RXC}
		Let $M=\mathbb{R}\times\cplx$. Then every totally real $\smoo^2$-smooth disc in $M$ is polynomially convex.
	\end{corollary}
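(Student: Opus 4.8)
The plan is to obtain this as the special case of \Cref{T:Holomorphic} flagged in the sentence preceding the statement. First I would observe that $\Omega := \cplx^2$ is a Runge domain (polynomials are dense in $\hol(\cplx^2)$ in the topology of uniform convergence on compacts), and that $g(z,w) := iz$ is a holomorphic — indeed polynomial — function on $\cplx^2$. Writing $z = x+iy$, one computes $\rl g(z,w) = \rl(iz) = -y$, so that
$$\{(z,w)\in\cplx^2 : \rl g(z,w) = 0\} = \{(z,w)\in\cplx^2 : \imag z = 0\} = \mathbb{R}\times\cplx = M.$$
Since $dg = i\,dz$ vanishes nowhere on $\cplx^2$ (in particular on $M$), all hypotheses of \Cref{T:Holomorphic} are satisfied, and its conclusion is exactly the assertion that every $\smoo^2$-smooth totally real disc in $M$ is polynomially convex.

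There is no genuine obstacle here: the only point requiring a moment's care is the bookkeeping identity $\rl(iz) = -\imag z$, which pins down the correct choice of $g$ (one could equally take $g(z,w) = -iz$, or more generally any $g$ whose real part cuts out the hyperplane $\{\imag z = 0\}$ in the first coordinate). As an alternative route that bypasses \Cref{T:Holomorphic} entirely, I would instead invoke \Cref{T:Smooth_Cross_C} with the smooth map $h\colon\cplx\to\mathbb{R}$, $h(x,y) := y$: then $E_h = \{x+iy : y=0\} = \mathbb{R}$, $dh = dy \ne 0$ on $E_h$, and $M = E_h\times\cplx$, so that theorem again yields polynomial convexity of every $\smoo^2$-smooth totally real disc in $M$. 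Either way, the corollary is immediate from results already established.
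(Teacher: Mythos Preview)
Your proposal is correct and follows exactly the approach the paper signals in the sentence preceding the corollary; the paper's formal proof unwinds one layer further by invoking \Cref{T:Polynomial} directly with $h(z_1,z_2)=e^{-iz_1}$ (which is precisely what \Cref{T:Holomorphic} does to your $g$), and also records the same alternative via \Cref{T:Smooth_Cross_C} that you mention. There is nothing to add.
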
	
	
	\noindent \Cref{Rmk:RXC} also follows from \Cref{T:Pluriharmonic} and \Cref{T:Smooth_Cross_C} independently.
	
	\smallskip
	
	In the second part of the paper, we will consider general compact subsets that lie inside certain Levi-flat hypersurfaces. The compacts now may not be totally real.
	We need some definition to state our results in this part. The following definitions can be found in Stolzenberg \cite{GStl63}.
	
	\begin{definition}
		Let $K$ be a compact subset of $\cplx^n.$ If $f:K\to \cplx\setminus \{0\}$ is of the form $f=\exp(\psi)$ for some map $\psi:K\to \cplx$ we say that $\ln (f)$ is defined and $\psi$ is a branch of $\ln(f).$
	\end{definition}	
	
	\begin{definition}
		Let $K$ be a compact subset of $\cplx^n.$ $K$ is said to be \emph{simply-coconnected}\index{Simply-coconnected} if, for every map $f:K\to \cplx\setminus \{0\},$ $\ln (f)$ is defined.
	\end{definition}	
	
	\begin{remark}[Stolzenberg]\label{Rmk:Simply_Coconntd_Cech}
		$K$ is simply-coconnected if and only if $\check{H}^{1}(K;\mathbb{Z})=0.$
	\end{remark}	
	\noindent	Hence, any contractible set is simply coconnected.
	\begin{definition}
		Let $V$ be an analytic variety in $\cplx^n.$ $V$ is said to be a \emph{Runge variety} if for every $K\subset V,$ $\widehat{K}\subset V.$ 
	\end{definition}

	\begin{definition}
		$K$ is said to be \emph{polynomially convex in dimension one}\index{Polynomially convex in dimension one} if, for every one-dimensional Runge variety $V$ such that $K\cap V$ is compact, $K\cap V$ is polynomially convex.
	\end{definition}
	\noindent	We are now in a position to present our first theorem in this part. 
	
	\begin{theorem}\label{T:Poly_Cnvx_OneDimntion}
		Let $P$ be a holomorphic polynomial in $\cplx^2.$ Let $M:=\{(z_1,z_2)\in \cplx^2:|P(z_1,z_2)|=1\}$ with $dP(z)\not= 0$ for all $z\in M,$ and $K\subset M$ be compact, simply-coconnected. If $P^{-1}\{c\}\cap K$ is polynomially convex for all $c\in \partial\mathbb{D},$ then $K$ is polynomially convex.
	\end{theorem}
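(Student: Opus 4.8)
The plan is to deduce the polynomial convexity of $K$ from a theorem of Stolzenberg \cite{GStl63}, according to which a compact set that is simply-coconnected and polynomially convex in dimension one is polynomially convex. Since $K$ is assumed simply-coconnected, the whole problem reduces to the second property: I would need to show that for every one-dimensional Runge variety $V\subseteq\mathbb{C}^2$ with $K\cap V$ compact, the set $K\cap V$ is polynomially convex.

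My starting point is the structural remark that each fiber $V_c:=P^{-1}(c)$, $c\in\partial\mathbb{D}$, is itself a smooth one-dimensional Runge subvariety of $\mathbb{C}^2$: it is smooth because $dP\neq 0$ on $M\supseteq V_c$, and it is Runge because $P-c$ is a polynomial vanishing on any compact $L\subseteq V_c$, so that $\widehat L\subseteq\{P=c\}=V_c$. These curves foliate $M$, and by hypothesis $K$ meets each of them in a polynomially convex set.

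Now fix a one-dimensional Runge variety $V$ with $K\cap V$ compact. I would first split $V$ into its finitely many irreducible components and, using an Oka--Weil argument to separate clopen pieces, reduce to the case that $V$ is irreducible. If $P|_V$ is constant, equal to $c$, then either $|c|\neq 1$ and $K\cap V\subseteq K\cap M=\emptyset$, or $|c|=1$ and $V$ is a component of the smooth curve $V_c$, so that $K\cap V$ is a clopen piece of the polynomially convex set $K\cap V_c$ and hence polynomially convex. If $P|_V$ is non-constant, then $V$ is an open Riemann surface, Stein since it is a closed submanifold of $\mathbb{C}^2$, and since $V$ is Runge the hull $\widehat{K\cap V}$ coincides with the $\mathcal{O}(V)$-convex hull of $K\cap V$ inside $V$, that is, with $K\cap V$ together with the relatively compact connected components of $V\setminus(K\cap V)$. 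I claim there are no such components. Suppose $U$ were one. Its boundary lies in $K\cap V\subseteq M$, so $|P|\equiv 1$ on $\partial U$, and the only possible zeros of $P|_V$ on $\overline U$ lie in $U$. Because $K$ is simply-coconnected, $P|_K$, and therefore $P|_{\partial U}$, has a single-valued continuous logarithm, so the winding number of $P$ along $\partial U$ vanishes; by the argument principle $P|_V$ then has no zero in $U$ either. Consequently $\log|P|$ is harmonic on a neighbourhood of $\overline U$ and vanishes on $\partial U$, so $|P|\equiv 1$ on $U$ by the maximum principle, which forces the open mapping $P|_V$ to be constant, a contradiction. Hence $K\cap V$ is polynomially convex, $K$ is polynomially convex in dimension one, and Stolzenberg's theorem gives $\widehat K=K$.

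The heart of the matter, and the place where simple-coconnectedness is indispensable, is the exclusion of a relatively compact complementary component $U$ carrying a zero of $P$; equivalently, it is the fact that the hull cannot reach into the region $\{|P|<1\}$, in particular near $\{P=0\}$, where the one obvious barrier, plurisubharmonicity of $|P|^2-1$ (which by itself only yields $\widehat K\subseteq\{|P|\le 1\}$), is useless. A technical subtlety I would have to address is that such a $U$ may have a very irregular boundary, so the argument-principle computation should be run on a smooth relatively compact subdomain of $U$ containing all the zeros of $P|_V$ and then transported back to $\partial U$ through the homology relation among these cycles, the vanishing of the winding number along $\partial U$ propagating inward.
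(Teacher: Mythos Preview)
There is a genuine gap in your plan: the implication you attribute to Stolzenberg—that a simply-coconnected compact which is polynomially convex in dimension one must be polynomially convex—is not, as far as I can see, a theorem in \cite{GStl63}. What the paper quotes from Stolzenberg is the opposite direction (\Cref{R:R-conx_PolyConvx}: a rationally convex simply-coconnected set is polynomially convex in dimension one). Knowing that $K\cap V$ is polynomially convex for every one-dimensional Runge variety $V$ says nothing, a priori, about points of $\widehat K\setminus K$ that are not captured by any such $V$; hulls need not carry analytic structure, so there is no general mechanism reducing polynomial convexity to the one-dimensional test you perform. Your argument-principle/maximum-principle analysis of relatively compact components of $V\setminus(K\cap V)$—while interesting, and plausibly correct modulo the boundary-regularity issues you flag—is therefore aimed at a property that does not, on its own, close the argument.

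The paper's proof is short and avoids this entirely. One uses the polynomial $P$ directly: since $P(K)\subset\partial\mathbb D$, every point of $P(K)$ is a peak point for $\mathscr P(P(K))$, so the Samuelsson--Wold fiber lemma (\Cref{R:Hull_fiber}) gives $P^{-1}\{\alpha\}\cap\widehat K=\widehat{P^{-1}\{\alpha\}\cap K}$ for each $\alpha\in P(K)$. The right-hand side equals $P^{-1}\{\alpha\}\cap K$ by the fiber hypothesis, so $P(K)\cap P(\widehat K\setminus K)=\emptyset$. Since $K$ is simply-coconnected and $P$ is holomorphic on all of $\mathbb C^2\supset\widehat K$, Stolzenberg's separating-function criterion (\Cref{R:Hull_Img X}) yields $\widehat K=K$. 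Only the fibers of $P$ itself are needed; there is no reason to test $K$ against arbitrary one-dimensional Runge varieties.
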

	We also present a result analogous to \Cref{T:Smooth_Cross_C} for general compact sets.
	\begin{theorem}\label{T:smooth_cross-cgeneral}
		Let $h:\cplx\to \mathbb{R}$ be a smooth map and $E_{h}:=\{x+iy\in \cplx:h(x,y)=0\}$ with $dh\ne 0$ on $E_{h}.$ Let $K$ be a connected simply-coconnected compact subset of $M:=E_{h}\times \cplx$ such that each fiber $K_c=\{w\in\cplx: (c,w)\in K\}$ is polynomially convex. Then $K$ is polynomially convex.	
	\end{theorem}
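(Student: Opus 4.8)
The plan is to study $K$ through the projection $\pi\colon\cplx^2\to\cplx$, $\pi(z,w)=z$, onto the first coordinate, noting that $\pi(K)\subseteq E_h$. Since $K$ is connected, $\gamma:=\pi(K)$ is a compact connected subset of the smooth embedded $1$-manifold $E_h$, hence is either a single point, a compact arc, or a whole compact component of $E_h$ (diffeomorphic to a circle). I treat the point and arc cases first, and then indicate how the circle case reduces to them.

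Suppose $\gamma$ is a point or an arc. Then $\gamma$ is a compact subset of $\cplx$ with empty interior and connected complement, so $\widehat{\gamma}=\gamma$; hence $\pi(\widehat{K})\subseteq\widehat{\pi(K)}=\gamma$, and therefore $\widehat{K}\subseteq\gamma\times\cplx\subseteq M$. If $\gamma=\{c\}$ then $K=\{c\}\times K_c$ and $\widehat{K}=\{c\}\times\widehat{K_c}=\{c\}\times K_c=K$, because $K_c$ is polynomially convex. If $\gamma$ is an arc, suppose for contradiction that $\widehat{K}\setminus K\neq\emptyset$. Here I would appeal to a structure theorem in the spirit of Stolzenberg: since $K$ is simply-coconnected, $\widehat{K}\setminus K$ contains a nonempty analytic subvariety of $\cplx^2\setminus K$ of pure dimension one; let $V$ be an irreducible component of it. Since $V\subseteq\widehat{K}\subseteq\gamma\times\cplx$ we have $\pi(V)\subseteq\gamma$. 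If $\pi|_V$ were nonconstant, $\pi(V)$ would have nonempty interior in $\cplx$, contradicting that $\gamma$ has empty interior; so $\pi|_V\equiv c_0$ for some $c_0\in\gamma$. Then $V$ is a closed analytic subvariety of $\{c_0\}\times(\cplx\setminus K_{c_0})$ of full dimension, i.e. a connected component of it; but $K_{c_0}$ is polynomially convex, so $\cplx\setminus K_{c_0}$ is connected and unbounded, whence $V=\{c_0\}\times(\cplx\setminus K_{c_0})$ is unbounded --- contradicting $V\subseteq\widehat{K}$. Therefore $\widehat{K}=K$.

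It remains to treat the case in which $\gamma$ is a compact component $\Gamma\cong S^1$ of $E_h$; now $\widehat{\Gamma}$ is the closed Jordan region bounded by $\Gamma$, which has nonempty interior, so the projection argument above does not apply directly. Pick $a$ in the bounded complementary component of $\Gamma$. Then the polynomial $z-a$ is zero-free on $K$, so by simple-coconnectedness $\ln(z-a)$ is defined on $K$: there is a continuous $g\colon K\to\cplx$ with $e^{g}=\pi-a$. Two facts about $g$ do the work --- its level sets inside $K$ are contained in the fibres $\{c\}\times\cplx$, so polynomial convexity of the fibres is preserved, and $g(K)$, being compact and connected, is a compact arc (or point) inside the totally real curve $\exp^{-1}(\Gamma-a)\subseteq\cplx$. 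Running the same analytic-structure argument with $g$ in place of $\pi$ --- using now that $z-a$ has no zeros on $\widehat{K}$, equivalently that the nonconstant-projection alternative for an irreducible component $V$ of $\widehat{K}\setminus K$ is again excluded --- yields $\widehat{K}=K$.

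The main obstacle is the structure-theoretic input used in the second paragraph: that the polynomial hull of a simply-coconnected compact set carries pure one-dimensional analytic structure over the set. This is exactly where the topological hypothesis on $K$ enters, and it is the heart of the matter. Granting it, the remainder is comparatively routine --- the empty-interior property of an arc kills the nonconstant-projection case, while polynomial convexity of each fibre $K_c$ (equivalently, connectedness of $\cplx\setminus K_c$) kills the constant-projection case --- and in the circle case the only extra wrinkle is ruling out zeros of $z-a$ on $\widehat{K}$, which I expect to go through by the same reasoning, the winding of $\Gamma$ about $a$ being offset by the branch of logarithm provided by simple-coconnectedness.
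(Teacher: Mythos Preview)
Your proposal rests on a result that does not exist. You write that ``since $K$ is simply-coconnected, $\widehat{K}\setminus K$ contains a nonempty analytic subvariety of $\cplx^2\setminus K$ of pure dimension one,'' and you yourself flag this as ``the main obstacle.'' It is more than an obstacle: no such theorem holds. Stolzenberg himself constructed hulls carrying no analytic structure whatsoever, and simple-coconnectedness does not change this --- analytic-structure theorems of the kind you have in mind (e.g.\ Alexander's) require strong size hypotheses such as finite linear measure, which are absent here. Without this step your arc-case argument collapses, since you have no variety $V$ to project; and your circle case inherits the same gap, compounded by the fact that your branch $g$ of $\log(z-a)$ is merely continuous on $K$ and not a priori holomorphic near $\widehat{K}$, so ``the same analytic-structure argument with $g$ in place of $\pi$'' is doubly unjustified.

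The paper's route is different and does not invoke analytic structure at all. It uses the same projection $P(z,w)=z$, and the three ingredients are: (i) every point of $P(K)\subset E_h$ is a peak point for $\poly(P(K))$, by Stout's lemma that boundary points of a polynomially convex planar compact are peak points --- this handles your arc and circle cases uniformly, since in the circle case $P(K)=\partial\widehat{P(K)}$; (ii) the Samuelsson--Wold fibre result then gives $P^{-1}\{c\}\cap\widehat{K}=\widehat{P^{-1}\{c\}\cap K}=\{c\}\times K_c$ for every $c\in P(K)$, whence $P(K)\cap P(\widehat{K}\setminus K)=\emptyset$; and (iii) Stolzenberg's criterion (if $X$ is simply-coconnected and some $f$ holomorphic near $\widehat{X}$ satisfies $f(X)\cap f(\widehat{X}\setminus X)=\emptyset$, then $\widehat{X}=X$) finishes the proof with $f=P$. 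This last step is where simple-coconnectedness actually enters --- not through analytic structure. Your case split on the topology of $\gamma$ and the logarithm trick are unnecessary once these tools are in hand.
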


	
	We now turn our attention to singular Levi-flat hypersurfaces. A singular hypersurface is said to be Levi flat if its regular part is Levi flat.
	We now provide an example of a non-polynomially convex compact subset which totally real except the singular point of the hypersurface. It shows that the singularities in the Levi-flat hypersurface have a role in determining polynomial convexity.
	
	\begin{example}
		Let us consider $M:=\{(z,w)\in \cplx^2:|z|=|w|\},$ $	K:=\{(z,w)\in \cplx^2:|z|=\rl w,0\le \rl w\le 1, \imag w=0\}\subset M.$ Then $K$ is not polynomially convex because $\{(z,w)\in \cplx^2:|z|\le 1, \rl w=1,\imag w=0\}\subset \widehat{K}.$ Note that $K$ is contractible. 
	\end{example} 
	
	\medskip
	Let $\Omega$ be a Runge domain in $\cplx^2$ and $h:\Omega\to \cplx$ be a holomorphic function and $M:=\{z\in \Omega:|h(z)|=1\}.$ Let $M^{*}:=\{z\in G:|h(z)|=1\}$ be the regular part of $M,$ where $G:=\Omega\setminus \{z\in \Omega:|h(z)|=1,dh(z)=0\}.$ By $M_{sing},$ we denote the singular part of $M.$
	
	\begin{theorem}\label{T:Singular_Levi_flat}
		Let $K$ be a totally real disc in $M^{*}.$ Then $K$ is polynomially convex if and only if $\widehat{K}\cap M_{sing}=\emptyset.$
	\end{theorem}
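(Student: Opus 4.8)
The forward implication will be immediate: if $K$ is polynomially convex then $\widehat K=K\subset M^{*}$, and since $M^{*}$ and $M_{sing}$ are disjoint by construction, $\widehat K\cap M_{sing}=\emptyset$. So all the content is in the converse, and my plan is to reduce it to \Cref{T:Polynomial} by replacing $\Omega$ with a smaller Runge domain on which the hypersurface $M$ has become nonsingular.

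Assume $\widehat K\cap M_{sing}=\emptyset$. First I would record that $M_{sing}=\{z\in\Omega:|h(z)|=1,\,dh(z)=0\}$ is closed in $\Omega$, and that, $\Omega$ being Runge, $\widehat K$ is a compact polynomially convex subset of $\Omega$; hence $U:=\Omega\setminus M_{sing}$ is an open neighbourhood of $\widehat K$ in $\cplx^2$. The central step is to produce, inside $U$, a Runge domain $\Omega'$ still containing $\widehat K$. For this I would invoke the classical fact that a polynomially convex compact set admits a neighbourhood basis of bounded polynomial polyhedra: choose a polydisc $D$ with $\widehat K\subset D$, cover the compact set $\overline{D}\setminus U$ by finitely many open sets $\{|q_j|>1\}$ with $q_j$ a polynomial satisfying $\max_{\widehat K}|q_j|<1$ (possible since $\widehat K$ is polynomially convex), and let $\Omega'$ be the intersection of $D$ with the sets $\{|q_j|<1\}$; then $\Omega'$ is a bounded polynomial polyhedron, hence a Runge domain, with $\widehat K\subset\Omega'\subset U\subset\Omega$.

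With such an $\Omega'$ fixed, put $M':=\{z\in\Omega':|h(z)|=1\}$. Since $\Omega'\subset U=G$ one has $M'=M^{*}\cap\Omega'$, and since $\Omega'\cap M_{sing}=\emptyset$ every $z\in M'$ satisfies $dh(z)\ne 0$, so $M'$ is a nonsingular hypersurface of exactly the form required by \Cref{T:Polynomial}. As $K\subset\widehat K\subset\Omega'$ and $K\subset M^{*}$, we get $K\subset M'$, and intersecting with $\Omega'$ the totally real submanifold of $M^{*}$ that contains $K$ exhibits $K$ as a totally real disc in $M'$. Applying \Cref{T:Polynomial} to $\Omega'$, $h|_{\Omega'}$ and $M'$ then gives that every totally real disc in $M'$---in particular $K$---is polynomially convex, completing the converse. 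I expect the only genuinely nontrivial point to be the construction of $\Omega'$, that is, the neighbourhood-basis property of polynomially convex compacts that separates $\widehat K$ from $M_{sing}$; this is classical (it is essentially the mechanism behind the Oka--Weil theorem), and the remaining steps are routine verifications, once one also notes the degenerate cases $M_{sing}=\emptyset$ (where the statement is \Cref{T:Polynomial} itself) and $\Omega=\cplx^2$.
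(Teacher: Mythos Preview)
Your argument is correct and is a genuinely different route from the paper's. The paper does not reduce to \Cref{T:Polynomial}; instead it reruns the entire machinery of that theorem in the singular setting: it invokes \Cref{L:Charc_Foliation} and \Cref{L:Func_Constant_Leaf} to get the characteristic foliation on $K$ with $h$ constant along leaves, argues via Poincar\'e--Bendixson that each fiber $K_c=h^{-1}\{c\}\cap K$ has $\check H^1(K_c,\mathbb Z)=0$ and is therefore polynomially convex (using \Cref{R:R_Convx_Runge_Varty} and \Cref{R:R-conx_PolyConvx}), and then finishes with \Cref{R:Hull_fiber} and \Cref{R:Hull_Img X} exactly as in Step~II of \Cref{T:Polynomial}. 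Your approach trades all of that repetition for a single classical fact---the neighbourhood basis of polynomial polyhedra for a polynomially convex compact---and then quotes \Cref{T:Polynomial} as a black box. This is cleaner and makes transparent where the hypothesis $\widehat K\cap M_{sing}=\emptyset$ enters (it is exactly what lets you separate $\widehat K$ from $M_{sing}$ by a Runge domain). The paper's approach, by contrast, keeps the fibered structure visible throughout; one minor advantage is that it shows the fiberwise polynomial convexity of $K_c$ uses only $K\subset M^{*}$, not the stronger hull condition. Your verification that a bounded polynomial polyhedron is Runge and that $K$ remains a totally real disc in $M'$ is routine and correct.
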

	\noindent As an application of \Cref{T:Singular_Levi_flat} we provide a necessary and sufficient condition for totally real discs in a particular type of Levi-flat hypersurface of the form $\{(z_1,z_2)\in \cplx^2: \rl (z_{1}^m+z^{n}_{2})=0\}$, $m, n\geq 2$, to be polynomially convex. For $n=2, m=2$ it is one of the normal form of the Levi-flat quadrics in the list given in \cite{DanielGong1999}.
	
	\begin{corollary}\label{Cor:SinLeviFlat_PolyCnvx}
		Every totally real disc $K$ in the singular Levi-flat hypersurface $M:=\{(z_1,z_2)\in \cplx^2: \rl (z_{1}^m+z^{n}_{2})=0\}$ is polynomially convex if and only if $(0,0)\notin \widehat{K}.$  
	\end{corollary}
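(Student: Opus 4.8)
The plan is to obtain \Cref{Cor:SinLeviFlat_PolyCnvx} as a direct application of \Cref{T:Singular_Levi_flat}, by first exhibiting $M$ as a hypersurface of the form $\{\,|h|=1\,\}$ for a suitable entire function $h$. Set
$h(z_1,z_2):=\exp\!\big(z_1^{m}+z_2^{n}\big)$, which is holomorphic on the Runge domain $\Omega=\cplx^2$. Since $|h(z)|=e^{\rl(z_1^{m}+z_2^{n})}$, we have $|h(z)|=1$ precisely when $\rl(z_1^{m}+z_2^{n})=0$; hence $M=\{z\in\cplx^2:|h(z)|=1\}$, exactly the setting in which \Cref{T:Singular_Levi_flat} is stated. (Equivalently, $M$ is the zero set of the pluriharmonic function $\rl(z_1^{m}+z_2^{n})$, so this also plays the role of a singular analogue of \Cref{T:Pluriharmonic}.)

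The second step is to pin down the singular locus. Because $h$ is zero-free, $dh(z)=h(z)\big(m\,z_1^{m-1}\,dz_1+n\,z_2^{n-1}\,dz_2\big)$, so $dh(z)=0$ if and only if $z_1^{m-1}=z_2^{n-1}=0$, i.e.\ — using the hypothesis $m,n\ge 2$ — if and only if $z=(0,0)$; moreover $(0,0)\in M$ since $\rl(0)=0$. Thus, in the notation preceding \Cref{T:Singular_Levi_flat}, $G=\cplx^2\setminus\{(0,0)\}$, the regular part is $M^{*}=M\setminus\{(0,0)\}$, and $M_{sing}=\{(0,0)\}$. Now apply \Cref{T:Singular_Levi_flat} to this $h$: for a totally real disc $K\subset M^{*}$ it gives that $K$ is polynomially convex if and only if $\widehat{K}\cap M_{sing}=\emptyset$, i.e.\ if and only if $(0,0)\notin\widehat{K}$, which is the claim. (As in \Cref{T:Singular_Levi_flat}, the disc is understood to lie in the regular part, i.e.\ $(0,0)\notin K$; if $(0,0)\in K$ the equivalence fails for trivial reasons, since then $(0,0)\in\widehat{K}$ while such a $K$ may still be polynomially convex.)

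I do not expect a substantive obstacle in this corollary: the only genuinely new observations are the passage from the pluriharmonic zero set $\{\rl(z_1^{m}+z_2^{n})=0\}$ to $\{|\exp(z_1^{m}+z_2^{n})|=1\}$, and the computation that $\{dh=0\}\cap M$ reduces to the single point $(0,0)$ — the latter being exactly where $m,n\ge 2$ is used (for $m=1$ or $n=1$ the hypersurface is nonsingular and the statement would be vacuous in the same spirit). All of the real work is contained in \Cref{T:Singular_Levi_flat}, which this corollary merely specializes.
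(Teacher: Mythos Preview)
Your proof is correct and follows exactly the paper's own approach: define $h(z_1,z_2)=e^{z_1^{m}+z_2^{n}}$, observe that $M=\{|h|=1\}$ with $M_{sing}=\{(0,0)\}$, and invoke \Cref{T:Singular_Levi_flat}. You have simply supplied more detail (the computation of $dh$ and the role of $m,n\ge 2$) than the paper's two-line proof.
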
	
	Next we come back to the Levi-flat hypersurface $\{(z,w)\in \cplx^{2}:|z|=|w|\}$ and provide an if and only if condition for a totally real disc lying there to be polynomially convex. This hypersurface also provides another normal form of Levi-flat quadrics in $\cplx^2$ in the list of \cite{DanielGong1999}. 
	\begin{theorem}\label{T:Singular_Hypersurface}
		Every totally real disc $K$ in $M:=\{(z,w)\in \cplx^{2}:|z|=|w|\}\setminus\{(0,0)\}$ is polynomially convex if and only if $\widehat{K}\cap\{zw=0\}=\emptyset.$	
	\end{theorem}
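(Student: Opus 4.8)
The plan is to treat the two implications separately. The ``only if'' part is immediate, and for the ``if'' part I would reduce to Theorem~\ref{T:Polynomial} after shrinking the ambient Levi-flat hypersurface to a Runge neighbourhood of the polynomial hull.

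For the ``only if'' part, suppose $K$ is polynomially convex, so $\widehat{K}=K\subset M$. On $M$ one has $|z|=|w|$, hence any point of $M$ with $zw=0$ satisfies $z=w=0$, which is excluded; therefore $M\cap\{zw=0\}=\emptyset$ and consequently $\widehat{K}\cap\{zw=0\}=K\cap\{zw=0\}=\emptyset$. (This direction uses only $\widehat{K}=K\subset M$, not that $K$ is a disc or totally real.)

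For the ``if'' part, assume $\widehat{K}\cap\{zw=0\}=\emptyset$, so that the polynomially convex compact set $\widehat{K}$ lies in the open set $(\cplx^{*})^{2}:=\cplx^{2}\setminus\{zw=0\}$. The first step is to invoke the standard fact that a polynomially convex compact has, inside any given open neighbourhood, a neighbourhood which is a bounded polynomial polyhedron, hence a Runge domain; this produces a Runge domain $\Omega_{0}$ with $\widehat{K}\subset\Omega_{0}\subset(\cplx^{*})^{2}$. On $\Omega_{0}$ the function $h(z,w):=z/w$ is holomorphic, and $dh$ vanishes nowhere on $\Omega_{0}$ because $\partial h/\partial z=1/w\ne 0$ there. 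Since $|z|=|w|$ on $M$ and $K\subset M\cap\Omega_{0}$, we get $K\subset\{\zeta\in\Omega_{0}:|h(\zeta)|=1\}$, so $K$ is a totally real disc lying in the \emph{nonsingular} Levi-flat hypersurface $\{\zeta\in\Omega_{0}:|h(\zeta)|=1\}$. Applying Theorem~\ref{T:Polynomial} with the Runge domain $\Omega_{0}$ and the holomorphic function $h=z/w$ then yields that $K$ is polynomially convex, as desired.

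I expect the only genuine subtlety to be this first step of the ``if'' part: one must be sure that the neighbourhood $\Omega_{0}$ of $\widehat{K}$ inside $(\cplx^{*})^{2}$ is truly a Runge domain, since that hypothesis is needed to invoke Theorem~\ref{T:Polynomial}. This is why $\Omega_{0}$ should be taken to be a bounded polynomial polyhedron, whose closure is of the form $\{|q_{1}|\le 1,\dots,|q_{m}|\le 1\}$ with the $q_{j}$ holomorphic polynomials and is therefore polynomially convex. Everything else is routine: once $\Omega_{0}$ is fixed, the singularity of $M$ at the origin plays no role because on $\Omega_{0}$ the hypersurface $M$ is exactly the smooth level set $\{|z/w|=1\}$, and the problem has been reduced to the nonsingular situation already established in Theorem~\ref{T:Polynomial}.
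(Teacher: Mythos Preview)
Your argument is correct. Both directions are handled properly: the ``only if'' part is the same triviality the paper records, and for the ``if'' part your reduction to Theorem~\ref{T:Polynomial} via a Runge neighbourhood of $\widehat{K}$ inside $(\cplx^{*})^{2}$ is valid. The key fact you invoke---that a polynomially convex compact admits arbitrarily small polynomial-polyhedral (hence Runge) neighbourhoods---is standard, and once $\Omega_{0}$ is fixed the hypotheses of Theorem~\ref{T:Polynomial} are met exactly as you say.

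The paper's own proof is not literally the same, though the content is close. Because $(\cplx^{*})^{2}$ itself is \emph{not} Runge, the authors cannot simply quote Theorem~\ref{T:Polynomial}; instead they rerun its two-step argument by hand with $P(z,w)=z/w$: they show directly that each fibre $K\cap\{z=cw\}$ is polynomially convex (using that the algebraic line $\{z=cw\}\subset\cplx^{2}$ is a Runge variety, together with the characteristic-foliation lemmas), and then appeal to Stolzenberg's criterion (Result~\ref{R:Hull_Img X}) and the Samuelsson--Wold peak-point result, noting that $z/w$ is approximable by polynomials on $\widehat{K}$ via Oka--Weil. Your approach is more economical: by first shrinking to a Runge $\Omega_{0}$ you make Theorem~\ref{T:Polynomial} applicable as a black box and avoid repeating its proof. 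What the paper's direct argument buys is that it exposes exactly where the hypothesis $\widehat{K}\cap\{zw=0\}=\emptyset$ enters (only in Step~II, to make $z/w$ holomorphic near $\widehat{K}$), whereas in your version that hypothesis is spent up front in constructing $\Omega_{0}$.
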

	\noindent We now state a corollary in the setting of the boundary of Hartogs triangle.
	\begin{corollary}\label{Cor: Hartogsbdy}
		Every totally real disc $K$ in the nonsingular part of the boundary of the Hartogs triangle $\{(z_1,z_2)\in \cplx^2: |z_1|<|z_2|<1\}$ is polynomially convex if and only if $(0,0)\notin \widehat{K}.$
	\end{corollary}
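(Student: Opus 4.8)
The plan is to split the nonsingular part of $\bdy H$ --- where $H:=\{(z_1,z_2)\in\cplx^2:|z_1|<|z_2|<1\}$ --- into its two smooth faces and treat each with one of the earlier results. Since $\overline H=\{|z_1|\le|z_2|\le 1\}$, one has $\bdy H=\{|z_1|=|z_2|\le 1\}\cup\{|z_2|=1,\ |z_1|\le 1\}$; the first face is singular at $(0,0)$ (there the gradient of $|z_1|^2-|z_2|^2$ vanishes), the two faces meet transversally --- hence non-smoothly --- along the torus $\{|z_1|=|z_2|=1\}$, and at every other point $\bdy H$ is a smooth hypersurface. Thus the nonsingular part of $\bdy H$ is the disjoint union of the relatively open pieces $U_A:=\{|z_1|=|z_2|,\ 0<|z_2|<1\}$ and $U_B:=\{|z_2|=1,\ |z_1|<1\}$; as $K$ is connected, it lies entirely in one of them.

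If $K\subset U_B\subset\{|z_2|=1\}$, then $K$ lies in the hypersurface $\{z\in\cplx^2:|h(z)|=1\}$ with $h(z_1,z_2)=z_2$ holomorphic on the Runge domain $\cplx^2$ and $dh\neq 0$, so \Cref{T:Polynomial} (equivalently Alexander's \Cref{R:Alexander}) shows $K$ is polynomially convex; then $\widehat K=K\subset\{|z_2|=1\}$, which does not meet $\{(0,0)\}$, so both statements in the asserted equivalence ``$K$ polynomially convex $\iff (0,0)\notin\widehat K$'' hold and this case is finished. If instead $K\subset U_A$, then $K$ is a totally real disc in $\{(z_1,z_2):|z_1|=|z_2|\}\setminus\{(0,0)\}$, so \Cref{T:Singular_Hypersurface} gives that $K$ is polynomially convex iff $\widehat K\cap\{z_1z_2=0\}=\emptyset$; hence it suffices to prove that $(0,0)\notin\widehat K$ iff $\widehat K\cap\{z_1z_2=0\}=\emptyset$. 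One direction is trivial because $(0,0)\in\{z_1z_2=0\}$. For the other I would show that the hull stays on the hypersurface, $\widehat K\subseteq\{|z_1|=|z_2|\}$: granting this, any $(a_1,a_2)\in\widehat K\cap\{z_1z_2=0\}$ satisfies $|a_1|=|a_2|$ and $a_1a_2=0$, so $a_1=a_2=0$, whence $\widehat K\cap\{z_1z_2=0\}\subseteq\{(0,0)\}$, which is empty once $(0,0)\notin\widehat K$. Together with \Cref{T:Singular_Hypersurface}, this yields the corollary.

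The step I expect to be the main obstacle is the inclusion $\widehat K\subseteq\{|z_1|=|z_2|\}$ in the case $K\subset U_A$. It is natural to try the polynomial map $\beta(z_2,\lambda)=(\lambda z_2,z_2)$, a biholomorphism of $\{z_2\neq 0\}$ that carries $K$ onto a totally real disc in the nonsingular Levi-flat hypersurface $\{|\lambda|=1\}$ --- polynomially convex by \Cref{T:Polynomial} --- but $\beta$ collapses $\{z_2=0\}$ to $(0,0)$ and is not invertible there, so it cannot by itself bound $\widehat K$; this is precisely why the hypothesis at $(0,0)$ cannot be dropped. The inclusion is where the disc hypothesis genuinely enters: $K$ is contractible, hence simply-coconnected ($\check H^{1}(K;\zahl)=0$), and $\{|z_1|=|z_2|\}\setminus\{(0,0)\}$ is Levi-flat, foliated by the punctured complex lines $\{z_1=\lambda z_2\}$, $|\lambda|=1$; the vanishing of $\check H^{1}(K;\zahl)$ is exactly what keeps the polynomial hull from escaping this hypersurface along the leaves. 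This is the mechanism underlying \Cref{T:Polynomial}, \Cref{T:Poly_Cnvx_OneDimntion} and \Cref{T:Singular_Hypersurface}, so the inclusion can be extracted from the argument behind \Cref{T:Singular_Hypersurface}; with it available, the bookkeeping above completes the proof.
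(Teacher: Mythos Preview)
Your decomposition of the nonsingular part of $\bdy H$ into the two faces $U_A$ and $U_B$, and your handling of each face by \Cref{T:Singular_Hypersurface} and \Cref{R:Alexander} respectively, is exactly the route the paper indicates in its one-line proof. You go further than the paper by noticing that \Cref{T:Singular_Hypersurface} yields the condition $\widehat K\cap\{z_1z_2=0\}=\emptyset$, whereas the corollary is stated with the condition $(0,0)\notin\widehat K$, and that passing from the latter to the former is a genuine extra step; the paper does not comment on this.

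The gap is in your proposed resolution of that step. You assert that the inclusion $\widehat K\subseteq\{|z_1|=|z_2|\}$ ``can be extracted from the argument behind \Cref{T:Singular_Hypersurface},'' but it cannot. In that proof, Step~I (polynomial convexity of each fibre $K\cap\{z_1=cz_2\}$) uses only the totally-real-disc hypothesis, but by itself says nothing about where $\widehat K$ sits. Step~II is where $\widehat K$ enters, and there the hypothesis $\widehat K\cap\{z_1z_2=0\}=\emptyset$ is used \emph{essentially}: it is precisely what makes $P(z,w)=z/w$ holomorphic in a neighbourhood of $\widehat K$, so that \Cref{R:Hull_Img X} applies. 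Drop that hypothesis and Step~II gives no information whatsoever about $\widehat K$; in particular it does not force $\widehat K$ to remain on the hypersurface. (Contrast with \Cref{T:Polynomial}, where the slicing function $h$ is holomorphic on a Runge domain and hence lies in $\poly(K)$ automatically---that is the difference in ``mechanism'' you allude to, and it is exactly why the singular theorem needs the extra hull hypothesis.) So the implication $(0,0)\notin\widehat K\Rightarrow\widehat K\cap\{z_1z_2=0\}=\emptyset$ for totally real discs in $U_A$ remains unproved in your write-up; this is the one substantive point that neither your proposal nor the paper's terse remark actually supplies.
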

	\noindent A proof of \Cref{Cor: Hartogsbdy} follows from \Cref{R:Alexander} and \Cref{T:Singular_Hypersurface}.
	
	\section{Technical Results}\label{S:technical}
	\noindent In this section we first collect some results from the literature those will be used in our proofs. The following result can be found in \cite[Corollary 27]{GStl63}.
	\begin{result}[Stolzenberg]\label{R:R-conx_PolyConvx}
		Every rationally convex simply-coconnected  set is polynomially convex in dimension one.
	\end{result}	
	
	\noindent We also make use of the following result \cite[Page 269, assertion (12)]{GStl63}.
	\begin{result}[Stolzenberg]\label{R:R_Convx_Runge_Varty}
		Let $K$ be a compact subset of the purely one-dimensional analytic variety $V$ in $\cplx^{n}$ such that $\widehat{K}\subset V.$ Then  $K$ is rationally convex. 
	\end{result}	
	
	\noindent The following result is also from Stolzenberg \cite{GStl63}.	
	
	\begin{result}[Stolzenberg]\label{R:Hull_Img X}
		Let $X$ be a compact subset of $\cplx^{n}.$ If $X$ is simply-coconnected and there is a function, $f$, holomorphic in a neighborhood of $\widehat{X}$ such that $f(X)\cap f(\widehat{X}\setminus X)=\emptyset,$ then $X$ is polynomially convex.
	\end{result}

	\noindent The following is due to Stolzenberg (see \cite[Lemma 2.3]{Gorai2014}).
	\begin{result}[Stolzenberg]\label{L:Gorai}
		Let $E$ be a compact subset of $\cplx^{n}.$ Assume that $\poly(E)$ contains a function $\phi$ such that $\phi(E)$ has empty interior and $\cplx\setminus\phi(E)$ is connected. Then $E$ is polynomially convex if and only if $\phi^{-1}(c)\cap E$ is polynomially convex for each $c\in \phi(E).$ 	
	\end{result}

	\noindent  The following result from Stout's book will also be useful in our proofs.
	\begin{result}(\cite[Lemma 1.6.18]{Sto07})\label{R:BdryPt_PeakPt}
		Let $X$ be a compact, polynomially convex subset of $\cplx.$ Then every point of $\partial X$(boundary of $X$) is a peak point for
		the algebra $\poly(X).$
	\end{result}
	\noindent The following result is due to Samuelsson and Wold \cite[Proposition 4.7]{SaW12}.	We will use this result crucially in our proofs.
	\begin{result}[Samuelsson-Wold]\label{R:Hull_fiber}
		Let $X$ be a compact subset of $\cplx^{n}$ and $F:\cplx^{n}\to\cplx^{m}$ be the uniform limit on $X$ of entire functions. Let $K=F(X).$ If $\alpha\in K$ is a peak point for the algebra $\poly(K),$ then 
		\begin{align*}
			F^{-1}\{\alpha\}\cap\widehat{X}=\widehat{F^{-1}\{\alpha\}\cap X}.
		\end{align*}
	\end{result}

	Next we mention couple of standard result from the theory of ordinary differential equations.
	Consider the system of differential equation
	
	\begin{align}\label{E:Diff eqn}
		\begin{cases}
			\frac{dx}{dt}&=F(x,y)\\
			\frac{dy}{dt}&=G(x,y).
		\end{cases}
	\end{align}
	
	\begin{result}[Poincar\'{e}]\label{R:Poincare}
		Every closed path of the system (\ref{E:Diff eqn}) necessarily surrounds at least one critical point.
	\end{result}
	
	\begin{result}[Poincar\'{e}-Bendixson]\label{R:Poincare-Bendixson}\index{Poincar\'{e}-Bendixson Theorem}
		Let $\Omega$ be a bounded region of the phase plane together its boundary, and assume that $\Omega$ does not contain any critical points of the system (\ref{E:Diff eqn}). If $V:=(x(t),y(t))$ is a path of (\ref{E:Diff eqn}) that lies in $\Omega$ for all $t\ge t_{0},$ then $V$ is either itself a closed path or it spirals toward a closed path as $t\rightarrow\infty.$ Thus in either case the system (\ref{E:Diff eqn}) has closed path in $\Omega.$
	\end{result}

	Next we state and prove a few lemmas that will be used in our proofs.
	The first one is easy but useful in our context.
	\begin{lemma}\label{L:Alg_Variety_Runge}
		Let $\Omega$ be a Runge domain in $\cplx^n$ and $f$ be a holomorphic function on $\Omega.$ Then every level set of $f$ is a Runge variety.	
	\end{lemma}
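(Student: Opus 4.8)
The plan is to deduce the statement from the Oka--Weil approximation theorem together with the maximum principle that is built into the very definition of the polynomial hull. Fix $c\in\cplx$ and put $V_c:=\{z\in\Omega: f(z)=c\}$, an analytic subvariety of $\Omega$; one must show that $\widehat{K}\subset V_c$ for every compact $K\subset V_c$.

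First I would invoke the Runge hypothesis on $\Omega$ in its two standard forms: for every compact set $L\subset\Omega$ one has $\widehat{L}\subset\Omega$, and every function in $\hol(\Omega)$ is a limit, uniformly on compact subsets of $\Omega$, of holomorphic polynomials. In particular $\widehat{K}\subset\Omega$, so $f$ is defined and holomorphic on $\widehat{K}$, and there are polynomials $p_j$ with $p_j\to f$ uniformly on the compact set $\widehat{K}$.

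The heart of the argument is then short. Since $f\equiv c$ on $K$, we have $\sup_{K}|p_j-c|\le \sup_{\widehat{K}}|p_j-f|\to 0$. As each $p_j-c$ is a holomorphic polynomial, the definition of $\widehat{K}$ forces $\sup_{\widehat{K}}|p_j-c|=\sup_{K}|p_j-c|\to 0$. Letting $j\to\infty$ in $\sup_{\widehat{K}}|f-c|\le \sup_{\widehat{K}}|f-p_j|+\sup_{\widehat{K}}|p_j-c|$ shows $f\equiv c$ on $\widehat{K}$, i.e. $\widehat{K}\subset V_c$. As $K\subset V_c$ and $c\in\cplx$ were arbitrary, every level set of $f$ is a Runge variety.

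I do not expect a genuine obstacle here --- the lemma is elementary, as the paper itself notes --- and the only point that needs care is that it is precisely the Runge hypothesis that guarantees $\widehat{K}\subset\Omega$, hence that it is even legitimate to evaluate $f$ and its polynomial approximants on all of $\widehat{K}$; without that one could not begin.
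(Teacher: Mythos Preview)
Your proof is correct and follows essentially the same approach as the paper: both use that $\widehat{K}\subset\Omega$ (from the Runge hypothesis) and then approximate $f-c$ uniformly by polynomials to conclude $\widehat{K}\subset V_c$. The only cosmetic difference is that the paper separates each point $\alpha\in\Omega\setminus V_c$ from $K$ individually by choosing a polynomial close to $f-c$ on $K\cup\{\alpha\}$, whereas you approximate on all of $\widehat{K}$ at once and deduce $f\equiv c$ there directly; the underlying idea is the same.
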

	\begin{proof}
		Let $Z_{c}:=\{z\in \Omega:f(z)=c\},$ where $c\in \partial\mathbb{D}.$ 
		Let $K$ be any compact subset of $Z_{c}.$ We claim that the polynomial convex hull $\widehat{K}$ of $K$ is also subset of $Z_{c}.$ Since, $\Omega$ is Runge, $\widehat{K}\Subset \Omega.$
		Suppose that $\alpha\in \Omega\setminus Z_{c}.$  Then,  $f(\alpha)\ne c.$ Define $g(z):=f(z)- c.$ Therefore,
		\begin{align*}
			|g(\alpha)|>0=\sup_{K}|g|.
		\end{align*}
		\noindent Since $\Omega$ is Runge, $g$ can be approximated by holomorphic polynomials on $K.$
		Hence $\alpha\notin \widehat{K}$ i.e. $\widehat{K}\subset Z.$	
	\end{proof}

	\par Let $M$ be a three-dimensional smooth submanifold of $\cplx^2$ and $N$ be a two-dimensional totally real submanifold of $M$ with (possibly empty) boundary. A field of lines on $N$ is defined as follows: for every $p\in  N,$
	\begin{align*}
		L_{p}=T_{p}N\cap T^{\cplx}_{p}M.
	\end{align*}
	Here, $T^{\cplx}_{p}M$ is the unique complex line through $p$ that is tangent to $M$ (where $T^{\cplx}_{p}M=T_{p}M\cap iT_{p}M$). The curves that are always tangent to the lines $L_{p}$ are the leaves of the foliation of $N$ that is defined by this line field. In other words, a curve $\gamma:(a,b)\to M$ is included in a leaf of the foliation if and only if the derivative $\gamma'(t)$ falls on the line $L_{\gamma(t)}$ for every $t\in (a,b).$ This gives a foliation of class $\smoo^{1}$ of $N,$ also known as the \textit{characteristic foliation} \index{Characteristic foliation} of $N$ 
	(see Stout \cite[Page 258]{Sto07} for details).
	

	\medskip
	
	\begin{lemma}\label{L:Charc_Foliation}
		Let $\Omega\subset \cplx^2$ be a domain and $\rho:\Omega\to \mathbb{R}$ is $\smoo^2$-smooth and $d\rho\ne 0$ on $M:=\{z\in \Omega:\rho(z)=0\}.$
		Let $K$ be a totally real $\smoo^2$-smooth disc in the 3-dimensional manifold $M.$ Then there exists a characteristic foliation on $K.$
	\end{lemma}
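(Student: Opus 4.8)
The plan is to produce the characteristic line field explicitly as the kernel of a nowhere-vanishing $\smoo^{1}$ one-form on $K$, verify that it has rank one at every point of $K$ (this is where total reality of $K$ is used), and then invoke the elementary fact that a $\smoo^{1}$ line field on a disc integrates to a $\smoo^{1}$ foliation.

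First I would pin down the complex tangent line. Let $\partial\rho$ denote the $(1,0)$-part of $d\rho$. Since $\rho$ is real valued, $d\rho = \partial\rho + \overline{\partial\rho} = 2\,\rl(\partial\rho)$, so the hypothesis $d\rho_{p}\neq 0$ on $M$ forces $\partial\rho_{p}\neq 0$ for every $p\in M$. Hence $\ker\partial\rho_{p}$ is a genuine complex line in $T_{p}\cplx^{2}\cong\cplx^{2}$, and a direct computation gives $\ker\partial\rho_{p} = T_{p}M\cap iT_{p}M = T^{\cplx}_{p}M$. Because $\rho\in\smoo^{2}$, the $(1,0)$-form $\partial\rho$ is $\smoo^{1}$, so $p\mapsto T^{\cplx}_{p}M$ is a $\smoo^{1}$ field of complex lines along $M$. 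For $p\in K$ this identifies $L_{p} := T_{p}K\cap T^{\cplx}_{p}M$ with $\{v\in T_{p}K : \partial\rho_{p}(v)=0\}$.

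Next comes the main point: $L_{p}$ is one-dimensional for every $p\in K$. Write $\iota\colon K\hookrightarrow\cplx^{2}$ for the inclusion, which is $\smoo^{2}$. Since $\rho\circ\iota\equiv 0$ we have $\iota^{*}d\rho = 0$, hence $\iota^{*}\rl(\partial\rho)=0$; consequently, for $v\in T_{p}K$ we get $\partial\rho_{p}(v) = i\,\imag(\partial\rho_{p})(v)$, so that $L_{p} = \ker\big((\iota^{*}\imag\,\partial\rho)_{p}\big)$, the kernel of a real $\smoo^{1}$ one-form on the surface $K$. If this one-form vanished at some $p\in K$, then $L_{p}=T_{p}K$, i.e.\ $T_{p}K\subseteq T^{\cplx}_{p}M$; but both are two-dimensional, so $T_{p}K = T^{\cplx}_{p}M$ would be a complex line, contradicting that $K$ is totally real at $p$. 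Hence $\eta := \iota^{*}\imag\,\partial\rho$ is a nowhere-vanishing $\smoo^{1}$ one-form on $K$, and $p\mapsto L_{p}=\ker\eta_{p}$ is a $\smoo^{1}$ rank-one distribution on $K$. (Alternatively, $\dim L_{p}\ge 1$ is automatic, since $T_{p}K$ and $T^{\cplx}_{p}M$ are two planes inside the three-dimensional $T_{p}M$.)

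Finally I would integrate the line field. Since $K$ is a disc, hence contractible, the line subbundle $L\subset TK$ is trivial and is spanned by a global $\smoo^{1}$ vector field $X$ (in local coordinates $(u,v)$ with $\eta = a\,du+b\,dv$ one may take $X = -b\,\partial_{u}+a\,\partial_{v}$, and such local choices patch over the contractible $K$). By the Picard--Lindel\"{o}f theorem the integral curves of $X$ exist, are unique, and depend $\smoo^{1}$-smoothly on the initial point; these curves are the leaves of a $\smoo^{1}$ foliation of $K$, and by construction a curve $\gamma$ lies in a leaf if and only if $\gamma'(t)\in L_{\gamma(t)}$ for all $t$. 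This is precisely the characteristic foliation of $K$. The genuine content of the argument is the rank-one claim above --- that $\dim L_{p}$ is never $2$, which is exactly where total reality of $K$ enters --- together with the regularity bookkeeping that keeps the resulting foliation of class $\smoo^{1}$ (this is why the hypotheses require $\rho$ and $K$ to be $\smoo^{2}$); the behaviour of the leaves along $\partial K$ is harmless, since $L$ is defined there as well and the leaves simply terminate at the boundary.
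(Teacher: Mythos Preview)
Your argument is correct, and the underlying strategy matches the paper's: build the line field $p\mapsto L_p=T_pK\cap T^{\cplx}_pM$, use total reality to show it has constant rank one, and then integrate. The execution, however, is genuinely different. The paper works in coordinates: it fixes a parametrization $\phi:U\to K$, pushes forward an undetermined vector field $a\,\partial_t+b\,\partial_s$, and then solves the real linear condition $\langle J(aA+bB),\nabla\rho\rangle=0$ explicitly for $a,b$; the rank-one claim is deduced from the dimension count $\dim(T_pK\cap T^{\cplx}_pM)\ge 1$ together with the observation that equality $T_pK=T^{\cplx}_pM$ would contradict total reality. Your route is more intrinsic: you identify $T^{\cplx}_pM=\ker\partial\rho_p$, observe that $\iota^*\rl(\partial\rho)=0$ forces $L_p=\ker(\iota^*\imag\,\partial\rho)_p$, and reduce the rank-one claim to the nonvanishing of a single $\smoo^1$ one-form $\eta$ on $K$. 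This packaging is cleaner---it makes transparent both where total reality enters (nonvanishing of $\eta$) and why $\smoo^2$ regularity of $\rho$ and $K$ yields a $\smoo^1$ foliation---and it avoids the explicit $4\times 2$ Jacobian computation. The paper's version, on the other hand, produces the vector field $X$ by an explicit formula, which is convenient for the immediate sequel (Lemma~\ref{L:Func_Constant_Leaf}) where one differentiates $h$ along the leaves.
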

	\begin{proof}
		Since $K$ is a totally real disc in $M,$ there exists a neighborhood $U$ of $\mathbb{\overline{D}}$ and a totally real smooth submanifold $N$ of $\cplx^2$ and a $C^{2}$-smooth diffeomorphism $\phi:U\to N$ such that $\phi(\mathbb{\overline{D}})=K.$ By shrinking $U$ (if necessary), we can assume $U$ is also contractible and we take $\triangle:=\phi(U).$ Therefore, $\triangle$ is also a $\smoo^2$-smooth totally real disc in $M$ which contains $K.$ We take 
		\begin{align*}
			\phi(t,s)=(\phi_1(t,s),\phi_2(t,s),\phi_3(t,s),\phi_4(t,s)).
		\end{align*}
		\noindent Therefore,
		$$d\phi|_{(t,s)}=
		\begin{pmatrix} 
			\frac{\partial \phi_{1}}{\partial{t}}(t,s) & \frac{\partial \phi_{1}}{\partial{s}}(t,s) \\[1.5ex]
			\frac{\partial \phi_{2}}{\partial{t}}(t,s)&  \frac{\partial \phi_{2}}{\partial{s}}(t,s)\\[1.5ex]
			\frac{\partial \phi_{3}}{\partial{t}}(t,s)&  \frac{\partial \phi_{3}}{\partial{s}}(t,s)\\[1.5ex]
			\frac{\partial \phi_{4}}{\partial{t}}(t,s) &  \frac{\partial \phi_{4}}{\partial{s}}(t,s)\\\\
		\end{pmatrix}_{4\times 2}=\begin{pmatrix}
			&\\A(t,s) & B(t,s)\\\\
		\end{pmatrix},$$ where
		$$A(t,s)=\begin{pmatrix} 
			\frac{\partial \phi_{1}}{\partial{t}}(t,s) \\[1.5ex]
			\frac{\partial \phi_{2}}{\partial{t}}(t,s)\\[1.5ex]
			\frac{\partial \phi_{3}}{\partial{t}}(t,s)\\[1.5ex]
			\frac{\partial \phi_{4}}{\partial{t}}(t,s)\\\\
		\end{pmatrix} \text{and }B(t,s)=	\begin{pmatrix} 
			\frac{\partial \phi_{1}}{\partial{s}}(t,s) \\[1.5ex]
			\frac{\partial \phi_{2}}{\partial{s}}(t,s)\\[1.5ex]
			\frac{\partial \phi_{3}}{\partial{s}}(t,s)\\[1.5ex]
			\frac{\partial \phi_{4}}{\partial{s}}(t,s)\\\\
		\end{pmatrix}.$$
		
		Let $\lambda=a(t,s)\frac{\partial}{\partial t}+b(t,s)\frac{\partial}{\partial s}$ be a non-vanishing vector field on $U.$
		
		Now we compute, for smooth $f$ on $K,$
		\begin{align}\label{E:Pushfrwrd_Vfld}
			d\phi(\lambda)(f)&=\lambda(f\circ \phi)\
			=\left(a(t,s)\frac{\partial}{\partial t}+b(t,s)\frac{\partial}{\partial s}\right)(f\circ \phi).
		\end{align}
		\noindent Now, by chain rule, we get that
		\begin{align*}
			\frac{\partial (f\circ \phi)}{\partial t}=\left(\frac{\partial \phi_{1}}{\partial t}\frac{\partial }{\partial x}\bigg|_{\phi(t,s)}+\frac{\partial \phi_{2}}{\partial t}\frac{\partial }{\partial y}\bigg|_{\phi(t,s)}+\frac{\partial \phi_{3}}{\partial t}\frac{\partial }{\partial u}\bigg|_{\phi(t,s)}+\frac{\partial \phi_{4}}{\partial t}\frac{\partial }{\partial v}\bigg|_{\phi(t,s)}\right)(f),
		\end{align*}
		
		\noindent and
		\begin{align*}
			\frac{\partial (f\circ \phi)}{\partial s}=\left(\frac{\partial \phi_{1}}{\partial s}\frac{\partial }{\partial x}\bigg|_{\phi(t,s)}+\frac{\partial \phi_{2}}{\partial s}\frac{\partial }{\partial y}\bigg|_{\phi(t,s)}+\frac{\partial \phi_{3}}{\partial s}\frac{\partial }{\partial u}\bigg|_{\phi(t,s)}+\frac{\partial \phi_{4}}{\partial s}\frac{\partial }{\partial v}\bigg|_{\phi(t,s)}\right)(f).
		\end{align*}
		
		\noindent Since $\lambda$ is a non-vanishing vector-field on $U$ and $\phi$ is a diffeomorphism,	therefore, $\nu =a(t,s)A(t,s)+b(t,s)B(t,s)$ is also a non-vanishing vector field on $\triangle.$
		
		\medskip

		\noindent Now we will see that the above vector field $\nu$ gives a $\smoo^{1}$-smooth characteristic foliation on $K.$ 	
		To see this we do some computations here:
		Let $\nu_{\alpha}\in T_{\alpha}^{\cplx} M.$ Then
		\begin{align*}
			& \langle J(a(t,s)A(t,s)+b(t,s)B(t,s),\nabla \rho)\rangle=0 ~, \\
			&\implies a(t,s)\langle JA(t,s),\nabla \rho\rangle+b(t,s)\langle JB(t,s),\nabla \rho\rangle=0,
		\end{align*}
		where $J$  is the standard complex structure on $\cplx^2,$ and $\langle,\rangle$ is the standard real inner product. A natural choice for the functions $a$ and $b$ is the following:
		
		\begin{align}\label{E:Choice a,b}
			\begin{cases}
				a(t,s)=\langle JB(t,s),\nabla \rho\rangle~ \text{ and}\\  b(t,s)=-\langle JA(t,s),\nabla \rho\rangle.
			\end{cases}	
		\end{align}

		\noindent From (\ref{E:Choice a,b}), we get that
		\begin{align*}
			a(t,s)&=-\frac{\partial\phi_{2}}{\partial s}\frac{\partial\rho}{\partial x}+\frac{\partial\phi_{1}}{\partial s}\frac{\partial\rho}{\partial y}-\frac{\partial\phi_{4}}{\partial s}\frac{\partial\rho}{\partial u}+\frac{\partial\phi_{3}}{\partial s}\frac{\partial\rho}{\partial v}~~~\text{and }\\
			b(t,s)&=\frac{\partial\phi_{2}}{\partial t}\frac{\partial\rho}{\partial x}-\frac{\partial\phi_{1}}{\partial t}\frac{\partial\rho}{\partial y}+\frac{\partial\phi_{4}}{\partial t}\frac{\partial\rho}{\partial u}-\frac{\partial\phi_{3}}{\partial t}\frac{\partial\rho}{\partial v}.\\
		\end{align*}
		
		\noindent Since $\phi$ is $C^{2}$-smooth and $\rho$ is $C^{\infty}$ smooth, therefore from the above calculation we get that $a(t,s)$ and $b(t,s)$ are smooth functions.
		We assign each point $\alpha\in K$ to $\nu_{\alpha}\in \mathcal{L}_{\alpha}:=T_{\alpha}K\cap T_{\alpha}^{\cplx} M.$ Since $K$ is totally real disc in three dimensional manifold $M,$ dimension of $\dim_{\mathbb{R}}\mathcal{L}_{\alpha}=1~\forall \alpha\in K.$ Therefore, the above assignment is smooth one dimensional distribution on $\triangle.$ Since every one dimensional distribution is integrable and hence gives the characteristic foliation on $K.$
	\end{proof}

	\begin{lemma}\label{L:Func_Constant_Leaf}
		If $\rho(z):=|h(z)|^2-1,$ where $h$ is a holomorphic function in a neighborhood of $K$ in the \Cref{L:Charc_Foliation}, then $h$ is constant along each leaf of the characteristic foliation 
		of $K.$
	\end{lemma}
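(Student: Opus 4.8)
The plan is to show that along any leaf $\gamma$ of the characteristic foliation, the derivative of $h\circ\gamma$ vanishes identically, which forces $h$ to be constant on the (connected) leaf. Recall from the construction in \Cref{L:Charc_Foliation} that a leaf, viewed back on the parameter domain $U$, is an integral curve of the vector field $\lambda = a(t,s)\partial_t + b(t,s)\partial_s$, and its push-forward $\nu_\alpha = d\phi(\lambda)$ at a point $\alpha = \phi(t,s)$ satisfies $\nu_\alpha \in T^{\cplx}_\alpha M$; this was precisely the defining property enforced by the choice \eqref{E:Choice a,b} of $a$ and $b$, since $\rho = |h|^2 - 1$ here and $\nabla\rho$ is a real normal to $M$ at $\alpha$.

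The key step is to translate the condition $\nu_\alpha \in T^{\cplx}_\alpha M$ into a statement about $h$. Since $\rho = |h|^2-1$ with $h$ holomorphic, one computes $\partial\rho = \bar h\, \partial h$ (where $\partial$ is the holomorphic differential in the ambient $\cplx^2$), so $d\rho$ as a real $1$-form is $2\,\rl(\bar h\,\partial h)$. The complex tangent space $T^{\cplx}_\alpha M$ is the maximal complex subspace of $\ker d\rho_\alpha$, and for a real hypersurface it equals $\ker \partial\rho_\alpha$ (viewing $\partial\rho$ as a $\cplx$-linear functional on $\cplx^2$). Therefore $\nu_\alpha \in T^{\cplx}_\alpha M$ is equivalent to $\partial\rho_\alpha(\nu_\alpha) = 0$, i.e. $\bar h(\alpha)\,\partial h_\alpha(\nu_\alpha) = 0$. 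Now $h$ does not vanish on $M$ (indeed $|h|=1$ there), so $\bar h(\alpha)\ne 0$, and we conclude $\partial h_\alpha(\nu_\alpha) = 0$, i.e. $dh_\alpha(\nu_\alpha) = 0$ since $h$ is holomorphic. But $\nu_\alpha$ is exactly the tangent vector to the leaf at $\alpha$, so $\dfrac{d}{dt}\big(h\circ\gamma\big) = dh_{\gamma(t)}(\dot\gamma(t)) = 0$ for all $t$.

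Finally, since each leaf of the foliation is a connected $\smoo^1$ curve and $h$ is $\smoo^1$ (being holomorphic) with vanishing derivative along it, $h$ is constant on each leaf, which is the claim. The main obstacle — really the only nontrivial point — is the clean identification $T^{\cplx}_\alpha M = \ker \partial\rho_\alpha$ and the bookkeeping that the $a,b$ chosen in \eqref{E:Choice a,b} indeed make $\nu$ land in $T^{\cplx}M$; this is essentially already recorded in \Cref{L:Charc_Foliation}, so here it suffices to carry the computation $\partial\rho = \bar h\,\partial h$ through and invoke $\bar h\ne 0$ on $M$. One should also note that, strictly speaking, $h$ need only be defined and holomorphic in a neighborhood of $K$ in $\cplx^2$ (as stated), and the argument is entirely local along each leaf, so no global hypothesis on $h$ is used.
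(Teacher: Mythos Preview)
Your argument is correct and is essentially the same as the paper's: both show that the tangent $\nu_\alpha$ to a leaf lies in $T^{\cplx}_\alpha M$, deduce $\partial\rho_\alpha(\nu_\alpha)=0$, use $\partial\rho=\bar h\,\partial h$ together with $\bar h\neq 0$ on $M$ to get $dh(\nu)=0$, and conclude $h$ is constant along leaves. The only difference is presentational---the paper writes out the two real orthogonality conditions $\langle\gamma',\nabla\rho\rangle=0$ and $\langle i\gamma',\nabla\rho\rangle=0$ in coordinates and then recombines them, whereas you invoke the equivalent coordinate-free identification $T^{\cplx}_\alpha M=\ker\partial\rho_\alpha$ directly.
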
	
	\begin{proof}
		Let the integral curve $\gamma(t):=\left(\gamma_1(t)+i\gamma_2(t),\gamma_3(t)+i\gamma_4(t)\right):(a,b)\to \triangle$ in $\triangle$ be a leaf of the characteristic foliation on $\triangle.$ Then $\gamma'(t)\in T_{\gamma(t)}K\cap T_{\gamma(t)}^{\cplx} M~~\forall t\in (a,b).$ Note that $\nabla\rho|_{\gamma(t)}=\left(\frac{\partial\rho}{\partial x},\frac{\partial\rho}{\partial y},\frac{\partial\rho}{\partial u},\frac{\partial\rho}{\partial v}\right)|_{\gamma(t)}$ is the normal to $M$ at $\gamma(t).$ Since $\gamma'(t)\in T_{\gamma(t)}^{\cplx} M,$ $i\gamma'(t)\in T_{\gamma(t)}M.$ Therefore,
		\begin{align*}
			\langle\gamma'(t),\nabla\rho|_{\gamma(t)}\rangle=0=\langle i\gamma'(t),\nabla\rho|_{\gamma(t)}\rangle.
		\end{align*}
		\noindent	Hence $\langle\gamma'(t),\nabla\rho|_{\gamma(t)}\rangle=0$ implies 
		\begin{align}\label{E:Gamma_tangent}
			&\gamma'_1(t)\frac{\partial \rho(\gamma(t))}{\partial x}+\gamma'_2(t)\frac{\partial \rho(\gamma(t))}{\partial y}+\gamma'_3(t)\frac{\partial \rho(\gamma(t))}{\partial u}+\gamma'_4(t)\frac{\partial \rho(\gamma(t))}{\partial v}=0,
		\end{align}
		\noindent and $\langle i\gamma'(t),\nabla\rho|_{\gamma(t)}\rangle=0$ implies
		\begin{align}\label{E:iGamma_tangent}	
			& -\gamma'_2(t)\frac{\partial \rho(\gamma(t))}{\partial x}+\gamma'_1(t)\frac{\partial \rho(\gamma(t))}{\partial y}-\gamma'_4(t)\frac{\partial \rho(\gamma(t))}{\partial u}+\gamma'_3(t)\frac{\partial \rho(\gamma(t))}{\partial v}=0.
		\end{align}		
		
		\noindent We get from (\ref{E:Gamma_tangent}) and (\ref{E:iGamma_tangent}) that
		\begin{multline*}
			\left(\gamma'_1(t)+i\gamma'_2(t)\right)\frac{\partial \rho(\gamma(t))}{\partial z}+\left(\gamma'_3(t)+i\gamma'_4(t)\right)\frac{\partial \rho(\gamma(t))}{\partial w}+\left(\gamma'_1(t)-i\gamma'_2(t)\right)\frac{\partial \rho(\gamma(t))}{\partial \bar{z}}\\+\left(\gamma'_3(t)-i\gamma'_4(t)\right)\frac{\partial \rho(\gamma(t))}{\partial \bar{w}}=0 ~
		\end{multline*}	and 
		\begin{multline*}
			i\left(\gamma'_1(t)+i\gamma'_2(t)\right)\frac{\partial \rho(\gamma(t))}{\partial z}+i\left(\gamma'_3(t)+i\gamma'_4(t)\right)\frac{\partial \rho(\gamma(t))}{\partial w}-i\left(\gamma'_1(t)-i\gamma'_2(t)\right)\frac{\partial \rho(\gamma(t))}{\partial \bar{z}}\\-i\left(\gamma'_3(t)-i\gamma'_4(t)\right)\frac{\partial \rho(\gamma(t))}{\partial \bar{w}}=0.
		\end{multline*}
		
		\noindent This implies 		
		\begin{align*}
			&\left(\gamma'_1(t)+i\gamma'_2(t)\right)\frac{\partial \rho(\gamma(t))}{\partial z}+\left(\gamma'_3(t)+i\gamma'_4(t)\right)\frac{\partial \rho(\gamma(t))}{\partial w}=0\\
			\implies&\left(\gamma'_1(t)+i\gamma'_2(t)\right)\frac{\partial h(\gamma(t))}{\partial z}+\left(\gamma'_3(t)+i\gamma'_4(t)\right)\frac{\partial h(\gamma(t))}{\partial w}=0\\
			\implies & d(h\circ \gamma)(t)=0\\
			\implies& h\circ\gamma=c, \text{ where c is some constant}.
		\end{align*}
		Therefore, we can say that each integral curve of $\nu$ in $K$ lies in $h^{-1}\{c\}\cap K$ for some constant $c.$
	\end{proof}

	\section{Nonsingular Levi-flat hypersurfaces}
	In this section we provide proofs of \Cref{T:Polynomial}, \Cref{T:Holomorphic}, \Cref{coro-polyhedron}, \Cref{T:Pluriharmonic}, \Cref{T:Poly_Cnvx_OneDimntion}, \Cref{Rmk:RXC} and \Cref{T:smooth_cross-cgeneral}.
	We first define $\rho(z):=h(z)\overline{h(z)}-1$ and $K$ be a smooth totally real disc in $M:=\{z\in \Omega: \rho(z)=0\}.$ Since $dh(z)\not= 0,$ $M$ is a real 3-dimensional submanifold of $\Omega \subset \cplx^{2}.$ Then the complex dimension of the complex tangent space $T^{\cplx}_{\alpha}M$ of $M$ is $1$ for each $\alpha\in M.$
	\begin{proof}[Proof of \Cref{T:Polynomial}]	
		We break our proof of \Cref{T:Polynomial} into the following steps.
		
		\medskip		
		\noindent {\bf Step I:} {\bfseries\boldmath Showing that each fiber $h^{-1}\{c\}\cap K$ is polynomially convex.}
		
		\medskip
		Since $dh|_{\alpha}\not =0$ on $h^{-1}\{c\},$ $\{z\in \Omega
		:h(z)=c\}$ is a complex manifold of pure dimension 1. In view of \Cref{L:Alg_Variety_Runge}, we can say that $h^{-1}\{c\}$ is a Runge variety for each $c\in \partial\mathbb{D},$ and we denote $K_{c}:=h^{-1}\{c\}\cap K.$ The following two cases hold.
		
		\smallskip
		
		\noindent  {\bf Case I: $ \check{H}^{1}(K_{c},\mathbb{Z})=0.$} This implies $K_{c}$ is simply-coconnected (see \Cref{Rmk:Simply_Coconntd_Cech}). Since $h^{-1}\{c\}$ is a Runge variety, by using \Cref{R:R_Convx_Runge_Varty}, we get that $K_{c}$ is rationally convex. Therefore, by using \Cref{R:R-conx_PolyConvx}, we can say that $K_{c}$ is polynomially convex in dimension one. Since $h^{-1}\{c\}$ is a one-dimensional Runge variety, therefore, $K_{c}\cap h^{-1}\{c\}=K_{c}$ is polynomially convex.

		\smallskip	
		
		\noindent  {\bf Case II: $ \check{H}^{1}(K_{c},\mathbb{Z})\ne 0.$} The leaves of the characteristic foliation of $\triangle$ are curves $\eta$ such that for all $t$, $\eta'(t)\in T_{\eta(t)}K\cap T_{\eta(t)}^{\cplx} M.$ On the disc $\triangle,$ there are nowhere vanishing vector fields $V$ (see \Cref{L:Charc_Foliation}) such that the vector $V_{x}$ is tangent to the leaf through $x$ of the characteristic foliation at all points $x\in \triangle.$ Thus, the solution curves of the differential equation $x'=V_{x}$ are contained in leaves of
		the foliation. According to the Poincar\'{e}-Bendixson theorem (\Cref{R:Poincare-Bendixson}), all of the integral curves for this foliation are arcs, homeomorphic to the interval $[0,1],$ with endpoints on $\partial\triangle.$ In fact, let $\gamma$ be a integral curve for this foliation. If $\gamma$ lies in $\triangle,$ then there exists a closed integral curve $\widetilde{\gamma}$ for a non-vanishing vector field $\widetilde{V}$ on $U,$ (where $\phi(U)=\triangle$). Therefore, by \Cref{R:Poincare-Bendixson}, either $\widetilde{\gamma}$ is closed or it spirals toward a closed path as $t\to \infty.$ Therefore, by \Cref{R:Poincare}, there exists a critical point in $U.$ This is a contradiction because $\widetilde{V}$ is nowhere vanishing vector field on $U.$ Note that $\triangle$ contain all fibers $h^{-1}\{c\}\cap K.$ Since $\check{H}^{1}(K_{c},\mathbb{Z})\not =0,$ $\triangle\setminus K_{c}$ is not connected. Take $\alpha\in K\setminus K_{c}$ and $\gamma$ is integral curve passing through $\alpha.$ We claim that $\gamma\cap K_{c}=\emptyset:$ if possible, assume that $\gamma\cap K_{c}\ne \emptyset$ and $\xi\in \gamma\cap K_{c}.$ Since each integral curve lies in $h^{-1}\{c_1\}$ for some constant $c_1\in\partial\mathbb{D},$ let $\gamma\subset h^{-1}\{c_1\}.$ Then we have $h(\alpha)=c_{1}.$ On the other hand $h(\xi)=c$ and $\xi\in \gamma.$ But we know that $h$ is constant along each integral curve (by \Cref{L:Func_Constant_Leaf}). This is a contradiction to the assumption that $\gamma\cap K_{c}\not=\emptyset.$ Therefore, $\gamma\cap K_{c}=\emptyset.$ This proves the claim. Next we claim that $\gamma\cap K_{c}=\emptyset$ is not possible. By Poincar\'{e}-Bendixson theorem (\Cref{R:Poincare-Bendixson}) each integral curve $\gamma$ through $\alpha$ joins $\alpha$ to the boundary of $\triangle.$ But $\triangle\setminus K_{c}$ is not connected, hence $\gamma\cap K_{c}=\emptyset$ is not possible. Therefore, $ \check{H}^{1}(K_{c},\mathbb{Z})\ne 0$ is not possible, i.e. {\bf Case II} can not happen. 
		
		\medskip
		
		\noindent {\bf Step II:} {\bfseries\boldmath Completing the proof.}
		
		\medskip	
		We claim that $h(K)\cap h(\widehat{K}\setminus K)=\emptyset.$ If possible, assume that $h(K)\cap h(\widehat{K}\setminus K)\not =\emptyset.$ Let $\alpha\in h(K)\cap h(\widehat{K}\setminus K).$ Then there exist $\eta\in K$ and $\xi\in \widehat{K}\setminus K$ such that $h(\eta)=\alpha$ and $h(\xi)=\alpha.$	
		We claim that  $\xi\in \widehat{h^{-1}\{\alpha\}\cap K}$ that is the fiber $\widehat{h^{-1}\{\alpha\}\cap K}$ is not polynomially convex. Let $Y=h(K).$ Since $h(K)\subset \partial\mathbb{D},$ each point of $Y$ is a peak point for the uniform algebra $\poly(Y),$ then by \Cref{R:Hull_fiber}, we obtain that
		\begin{align*}
			h^{-1}\{\alpha\}\cap\widehat{K}=\widehat{h^{-1}\{\alpha\}\cap K}.	
		\end{align*}
		\noindent Therefore,		
		\begin{align*}
			\xi\in h^{-1}\{\alpha\}\cap\widehat{K}=\widehat{h^{-1}\{\alpha\}\cap K}.	
		\end{align*}	
		Hence, 		
		$h^{-1}\{\alpha\}\cap K$ is not polynomially convex. This is a contradiction to {\bf Step I}. Hence, $h(K)\cap h(\widehat{K}\setminus K)=\emptyset.$ Therefore, by \Cref{R:Hull_Img X}, $K$ is polynomially convex.	
	\end{proof}

	\begin{proof}[Proof of \Cref{T:Holomorphic}]
		Proof of \Cref{T:Holomorphic} follows from \Cref{T:Polynomial}. To see this, we define		$h(z):=e^{g(z)}$ on $\Omega.$ Then $\{z\in \Omega:|e^{g(z)}|=1\}=\{z\in \Omega:\rl g(z)=0\}=
		M.$	Since $\Omega$ is Runge and $e^{g(z)}$ is holomorphic on $\Omega,$ therefore every totally real smooth disc contained in $\{z\in \Omega:|e^{g(z)}|=1\}$  is polynomially convex. This proves the theorem.
	\end{proof}	
	
	\begin{proof}[Proof of \Cref{T:Pluriharmonic}]
		Proof of \Cref{T:Pluriharmonic} easily follows from \Cref{T:Holomorphic}. Since $\Omega$ is simply-connected, there exists a holomorphic function $P$ on $\Omega$ such that $h(z)=\rl P(z).$ Using \Cref{T:Holomorphic}, we can say that every smooth totally real disc in $\{z\in \Omega:\rl P(z)=0\}$ is polynomially convex. Hence,  every smooth totally real disc in $\{z\in \Omega: h=\rl P(z)=0\}$ is polynomially convex.
	\end{proof}

	\begin{proof}[Proof of \Cref{T:Poly_Cnvx_OneDimntion}]
		Since $K$ is polynomially convex in dimension one, $h^{-1}\{c\}\cap K$ is polynomially convex for all $c\in \partial\mathbb{D}.$ Hence $h(K)\cap h(\widehat{K}\setminus K)=\emptyset$ (see {\bf Step II} of the proof of \Cref{T:Polynomial}). Again $K$ is simply-coconnected, therefore, by \Cref{R:Hull_Img X}, we can say that $K$ is polynomially convex.
	\end{proof}
	
	\begin{proof}[Proof of Theorem~\ref{T:Smooth_Cross_C}]
		First, we claim that integral curves for the characteristic foliation for $M$ are the curves contained in $P^{-1}\{c\}$ for some $c,$ where $P:\cplx^2\to \cplx, (z,w)\mapsto z$ is the projection map. To see this, let $(x,y,u,v)$ be the coordinates of $\mathbb{R}^{4}=\cplx^2.$ Let us define $\rho:\cplx^2\to \mathbb{R}$ by $\rho(x,y,u,v)=h(x,y),$ then $M=\rho^{-1}\{0\}.$ The normal vector to $M$ at $(x,y,u,v)$ is given by
		\begin{align}\label{E:Smooth_Normal to M}
			\nabla\rho|_{(x,y,u,v)}=\left(\frac{\partial\rho}{\partial x},\frac{\partial\rho}{\partial y},\frac{\partial\rho}{\partial u},\frac{\partial\rho}{\partial v}\right)\bigg|_{(x,y,u,v)}=\left(\frac{\partial h}{\partial x},\frac{\partial h}{\partial y},0,0\right).
		\end{align}

		\medskip
		
		\noindent Let $\gamma(t)=(x(t), y(t), u(t), v(t))$ be a integral curve for the characteristic foliation for $M.$ Since $\gamma'(t)\in T_{\gamma(t)}^{\cplx} M,$ $i\gamma'(t)\in T_{\gamma(t)}M,$ therefore,
		\begin{align*}
			\langle\gamma'(t),\nabla\rho|_{\gamma(t)}\rangle=0=\langle i\gamma'(t),\nabla\rho|_{\gamma(t)}\rangle.
		\end{align*}
		\noindent	Hence $\langle\gamma'(t),\nabla\rho|_{\gamma(t)}\rangle=0$ implies 
		\begin{align}\label{E:Smooth_Ellipse_tangent}
			\frac{\partial h}{\partial x}x'(t)+\frac{\partial h}{\partial y}y'(t)=0,
		\end{align}
		\noindent and $\langle i\gamma'(t),\nabla\rho|_{\gamma(t)}\rangle=0$ implies
		\begin{align}\label{E:Smooth_Ellipse_itangent}	-\frac{\partial h}{\partial x}y'(t)+\frac{\partial h}{\partial y}x'(t)=0.
		\end{align}		
		\noindent From (\ref{E:Smooth_Ellipse_tangent}) and (\ref{E:Smooth_Ellipse_itangent}), we get that
		$$
		\begin{pmatrix} 
			\frac{\partial h}{\partial x} & \frac{\partial h}{\partial y} \\[1.5ex]
			\frac{\partial h}{\partial y} & -\frac{\partial h}{\partial x}\\
		\end{pmatrix}\begin{pmatrix}
			x'(t) \\
			y'(t)\\
		\end{pmatrix}=\begin{pmatrix}
			0 \\
			0\\
		\end{pmatrix}.$$
		\noindent Since $\left(\frac{\partial h}{\partial x}\right)^2+\left(\frac{\partial h}{\partial y}\right)^2\ne 0$ on $M,$ therefore $x'(t)=0=y'(t).$ This implies $x(t)=c_1,~~ y(t)=c_2$ for some $(c_1,c_2)\in E_{h}.$ Hence $\gamma\subset P^{-1}\{c\}$ which proves our claim.
		
		\medskip
		Let $K$ be a smooth totally real disc in $M.$ Following the proof of \Cref{T:Polynomial}, we can prove that each fiber $K_{c}=P^{-1}(c)\cap K=(\{c\}\times\cplx)\cap K$ is polynomially convex. To see this, let us define $K_{c}:=P^{-1}\{c\}\cap K$ and $V_{c}=\{(z,w)\in \cplx^2:z=c\}.$
		
		\medskip
		
		\noindent  {\bf Case I: $ \check{H}^{1}(K_{c},\mathbb{Z})=0.$} This implies $K_{c}$ is simply-coconnected (see \Cref{Rmk:Simply_Coconntd_Cech}). Since $V_{c}$ is a Runge variety, by using \Cref{R:R_Convx_Runge_Varty}, we get that $K_{c}$ is rationally convex. Therefore, by using \Cref{R:R-conx_PolyConvx}, we can say that $K_{c}$ is polynomially convex in dimension one. Since $V_{c}$ is a one-dimensional Runge variety, therefore, $K_{c}\cap V_{c}=K_{c}$ is polynomially convex. 
		
		\medskip
		
		\noindent {\bf Case II: $ \check{H}^{1}(K_{c},\mathbb{Z})\ne 0$}. By the similar argument as \Cref{T:Polynomial} ({\bf Step I}, {\bf Case II}), $ \check{H}^{1}(K_{c},\mathbb{Z})\ne 0$ can not happen.
		\medskip
		
		\noindent Therefore, 	$K_{c}=P^{-1}\{c\}\cap K$ is polynomially convex. $K$ is polynomially convex then follows from \Cref{R:Hull_Img X}.
	\end{proof}

	\begin{proof}[Proof of \Cref{Rmk:RXC}]
		Let $\Omega=\cplx^2$ and $f(z_1)=e^{-iz_1},$ Then $M:=\{(z_1,z_2)\in \cplx^2:|e^{-iz_1}|=1\}:=\mathbb{R}\times \cplx.$	Proof of \Cref{Rmk:RXC} now follows from \Cref{T:Polynomial}.
		
		\medskip
		Alternatively, if we take $h(z_1,z_2)=\imag z_1,$ then $\mathbb{R}\times \cplx=\{z\in \cplx^2:h(z)=0\}.$ Now proof follows from \Cref{T:Smooth_Cross_C}.
	\end{proof}
	
	\begin{proof}[Proof of \Cref{T:smooth_cross-cgeneral}]
		Let $P:\cplx^2\to \cplx, P(z,w)=z$ be the projection map. Since $K$ is polynomially convex in dimension one, $P^{-1}\{c\}\cap K$ is polynomially convex for all $c\in \cplx.$ We now claim that each point of $Y:=P(K)$ is a peak point for the algebra $\poly(Y).$ First, we show that $Y\subset \{\alpha\in \cplx:h(\alpha)=0\}:$ Let $\alpha\in Y,$ then there exists $(\xi,\eta)\in K\subset E_{h}\times \cplx$ such that $P(\xi,\eta)=\alpha.$ This implies $\xi=\alpha$ and since $(\xi,\eta)\in E_{h}\times \cplx,$ $h(\xi)=0$ i.e., $h(\alpha)=0.$ Therefore, $Y\subset \{\alpha\in \cplx:h(\alpha)=0\}.$
		
		\medskip
		\noindent Since $dh\ne 0$ on $E_{h},$ hence $E_{h}$ is a smooth submanifold of $\cplx$ of real dimension one. Note that $Y=P(K)$ is a connected subset of $E_{h}.$
		
		\medskip
		\noindent {\bf Case I}: $\cplx\setminus P(K)$ is connected, then $P(K)$ is polynomially convex. By \Cref{R:BdryPt_PeakPt}, we can say that each point of $\partial P(K)=P(K)$ is a peak point for the algebra $\poly(P(K)).$
		
		\medskip
		\noindent {\bf Case II}: $\cplx\setminus P(K)$ is not connected. Then $\widehat{P(K)}$ is the union of $P(K)$ with all bounded components of $\cplx\setminus P(K).$ Since $P(K)$ is connected, hence $\partial \widehat{P(K)}=P(K).$ By \Cref{R:BdryPt_PeakPt}, we can say that each point of $P(K)$ is a peak point for the algebra $\poly(P(K)).$

		By \Cref{R:Hull_fiber}, we get that \begin{align*}
			P^{-1}\{c\}\cap\widehat{K}=\widehat{P^{-1}\{c\}\cap K} 
		\end{align*} 
		for all $c\in \cplx.$ Therefore, $P(K)\cap P(\widehat{K}\setminus K)=\emptyset.$ Since $K$ is simply-coconnected, by \Cref{R:Hull_Img X}, $K$ is polynomially convex.	
	\end{proof}	
	

	
	\section{Singular Levi-flat hypersurfaces}
	In this section we provide the proofs of the theorems concerning singular Levi-flat hypersurfaces. First we give a proof of \Cref{T:Singular_Levi_flat}.
	\begin{proof}[Proof of Theorem~\Cref{T:Singular_Levi_flat}]
		If $K$ is polynomially convex, then obviously $\widehat{K}\cap M_{sing}=\emptyset.$
		
		\medskip
		We now prove the converse part. Since $dh|_{\alpha}\not =0,$ on $h^{-1}\{c\}\cap G,$ then $\{z\in G:h(z)=c\}$ is a complex manifold of pure dimension $1.$ By \Cref{L:Charc_Foliation}, there exist characteristic foliation $K,$ and by \Cref{L:Func_Constant_Leaf}, we get that the function $h(z,w)$ is constant along each leaf of the characteristic foliation of $\triangle.$
		
		\medskip
		\noindent {\bf Step I:} {\bfseries\boldmath Showing that each fiber $h^{-1}\{c\}\cap G\cap K$ is polynomially convex.}
		
		\medskip	
		We define $K_{c}:=P^{-1}\{c\}\cap G\cap K$ and $V_{c}=\{z\in \cplx^2:h(z)=c\}.$ Since $K\cap M_{sing}=\emptyset,$ therefore $K_{c}=V_{c}\cap K.$ Hence, it is enough to show that $V_{c}\cap K$ is polynomially convex.
		
		\medskip
		
		\noindent  {\bf Case I: $ \check{H}^{1}(K_{c},\mathbb{Z})=0.$} This implies $K_{c}$ is simply-coconnected (see \Cref{Rmk:Simply_Coconntd_Cech}). It is easy to see that $V_{c}$ is a Runge variety. Then by using \Cref{R:R_Convx_Runge_Varty}, we get that $K_{c}$ is rationally convex. Therefore, by using \Cref{R:R-conx_PolyConvx}, we can say that $K_{c}$ is polynomially convex in dimension one. Since $V_{c}$ is a one-dimensional Runge variety, therefore, $K_{c}\cap V_{c}=K_{c}$ is polynomially convex. 
		
		\medskip
		
		\noindent {\bf Case II: $ \check{H}^{1}(K_{c},\mathbb{Z})\ne 0$}. By the similar argument as \Cref{T:Polynomial} ({\bf Step I}, {\bf Case II}), $ \check{H}^{1}(K_{c},\mathbb{Z})\ne 0$ can not happen.
		\medskip
		
		\noindent Therefore, 	$K_{c}=P^{-1}\{c\}\cap K$ is polynomially convex.
		
		\medskip
		
		\noindent {\bf Step II:} {\bfseries\boldmath Completing the proof:}
		
		Now we show that $K$ is polynomially convex. Note that $h$ is holomorphic in $\cplx^2.$ We claim that $h(K)\cap h(\widehat{K}\setminus K)=\emptyset.$ If possible, assume that $h(\widehat{K}\setminus K)\not =\emptyset.$ Let $\alpha\in h(K)\cap h(\widehat{K}\setminus K).$ Then there exist $\eta\in K$ and $\xi\in \widehat{K}\setminus K$ such that $h(\eta)=\alpha$ and $h(\xi)=\alpha.$	
		We claim that  $\xi\in \widehat{h^{-1}\{\alpha\}\cap K}$ that is the fiber $\widehat{h^{-1}\{\alpha\}\cap K}$ is not polynomially convex. Let $Y=h(K).$ Since $h(K)\subset\partial\mathbb{D},$ each point of $Y$ is a peak point for the uniform algebra $\poly(Y),$ then by \Cref{R:Hull_fiber}, we obtain that
		
		\begin{align*}
			h^{-1}\{\alpha\}\cap\widehat{K}=\widehat{h^{-1}\{\alpha\}\cap K}.	
		\end{align*}
		Therefore,		
		
		\begin{align*}
			\xi\in h^{-1}\{\alpha\}\cap\widehat{K}=\widehat{h^{-1}\{\alpha\}\cap K}.	
		\end{align*}	
		Hence 		
		$\widehat{h^{-1}\{\alpha\}\cap K}$ is not polynomially convex. This is a contradiction. Hence $h(K)\cap h(\widehat{K}\setminus K)=\emptyset.$ Therefore, by \Cref{R:Hull_Img X}, $K$ is polynomially convex.		
	\end{proof}
	\begin{proof}[Proof of \Cref{Cor:SinLeviFlat_PolyCnvx}]
		We define the holomorphic function $h:\cplx^2\to \cplx$ by $h(z_1,z_2):=e^{z^{m}_1+z^{n}_2}.$ Then $\{(z_1,z_2)\in \cplx^2: |h(z_1,z_2)|=1\}=M$ and $M_{sing}=\{(0,0)\}.$ Now the proof follows from \Cref{T:Singular_Levi_flat}.
	\end{proof}
	We now give a proof of \Cref{T:Singular_Hypersurface}.
	\begin{proof}[Proof of Theorem~\ref{T:Singular_Hypersurface}]
		If $K$ is polynomially convex, then obviously $\widehat{K}\cap\{zw=0\}=\emptyset.$
		
		\medskip
		We now prove the converse part. Consider the domain $G:=\cplx^{2}\setminus \{(z,w)\in \cplx^2:zw=0\}$ and holomorphic function $P(z,w)=\frac{z}{w}.$ Then $M:=\rho^{-1}\{0\},$ where $\rho(z,w)=\frac{z}{w}\overline{\frac{z}{w}}-1.$ Note that since $dP|_{\alpha}\not =0,$ on $P^{-1}\{c\},$ then $\{z\in G:P(z)=c\}$ is a complex manifold of pure dimension $1.$ By \Cref{L:Charc_Foliation}, there exist characteristic foliation $K,$ and by \Cref{L:Func_Constant_Leaf}, we get that the function $h(z,w)=\frac{z}{w}$ is constant along each leaf of the characteristic foliation of $\triangle.$
		
		\medskip
		\noindent {\bf Step I:} {\bfseries\boldmath Showing that each fiber $P^{-1}\{c\}\cap K$ is polynomially convex.}
		
		\medskip	
		We define $K_{c}:=P^{-1}\{c\}\cap K$ and $V_{c}=\{(z,w)\in \cplx^2:z=cw\}.$ Since $K\cap \{(z,w)\in \cplx^2:zw=0\}=\emptyset,$ therefore $K_{c}=V_{c}\cap K.$ Hence, it is enough to show that $V_{c}\cap K$ is polynomially convex.
		
		\medskip
		
		\noindent  {\bf Case I: $ \check{H}^{1}(K_{c},\mathbb{Z})=0.$} This implies $K_{c}$ is simply-coconnected (see \Cref{Rmk:Simply_Coconntd_Cech}). Since $V_{c}$ is a Runge variety, by using \Cref{R:R_Convx_Runge_Varty}, we get that $K_{c}$ is rationally convex. Therefore, by using \Cref{R:R-conx_PolyConvx}, we can say that $K_{c}$ is polynomially convex in dimension one. Since $V_{c}$ is a one-dimensional Runge variety, therefore, $K_{c}\cap V_{c}=K_{c}$ is polynomially convex. 
		
		\medskip
		
		\noindent {\bf Case II: $ \check{H}^{1}(K_{c},\mathbb{Z})\ne 0$}. By the similar argument as \Cref{T:Polynomial} ({\bf Step I}, {\bf Case II}), $ \check{H}^{1}(K_{c},\mathbb{Z})\ne 0$ can not happen.
		\medskip
		
		\noindent Therefore, 	$K_{c}=P^{-1}\{c\}\cap K$ is polynomially convex.
		
		\medskip
		
		\noindent {\bf Step II:} {\bfseries\boldmath Completing the proof:}
		
		Now we show that $K$ is polynomially convex. By assumption, we have $\widehat{K}\cap \{(z,w)\in \cplx^{2}:zw=0\}=\emptyset,$ therefore, $P$ is holomorphic in a neighborhood of $\widehat{K}.$ We claim that $P(K)\cap P(\widehat{K}\setminus K)=\emptyset.$ If possible, assume that $P(\widehat{K}\setminus K)\not =\emptyset.$ Let $\alpha\in P(K)\cap p(\widehat{K}\setminus K).$ Then there exist $\eta\in K$ and $\xi\in \widehat{K}\setminus K$ such that $P(\eta)=\alpha$ and $P(\xi)=\alpha.$	
		We claim that  $\xi\in \widehat{p^{-1}\{\alpha\}\cap K}$ that is the fiber $\widehat{P^{-1}\{\alpha\}\cap K}$ is not polynomially convex. Let $Y=P(K).$ Since $P(K)\subset \partial\mathbb{D},$ each point of $Y$ is a peak point for the uniform algebra $\poly(Y),$ then by \Cref{R:Hull_fiber}, we obtain that
		
		\begin{align*}
			P^{-1}\{\alpha\}\cap\widehat{K}=\widehat{P^{-1}\{\alpha\}\cap K}.	
		\end{align*}
		Therefore,		
		
		\begin{align*}
			\xi\in P^{-1}\{\alpha\}\cap\widehat{K}=\widehat{P^{-1}\{\alpha\}\cap K}.	
		\end{align*}	
		Hence 		
		$\widehat{P^{-1}\{\alpha\}\cap K}$ is not polynomially convex. This is a contradiction. Hence $P(K)\cap P(\widehat{K}\setminus K)=\emptyset.$ Therefore, by \Cref{R:Hull_Img X}, $K$ is polynomially convex.		
	\end{proof}
	Let  $h:\cplx\to \mathbb{R}$ be a smooth function and $E_{h}:=\{z\in \cplx:h(z)=0\}.$ Let $E^{*}_{h}:=\{z\in G:h(z)=0\}$ be the regular part of $E_{h},$ where $G:=\cplx\setminus \{z\in \cplx:h(z)=0,dh(z)=0\}.$ By $(E_h)_{sing},$ we denote the singular part of $E_h.$
	\begin{theorem}\label{T:singular_cross-cgeneral}
		Let $K$ be a totally real disc in the singular hypersurface $E^{*}_{h}\times \cplx\subset \cplx^2.$ Then $K$ is polynomially convex.	
	\end{theorem}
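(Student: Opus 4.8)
The plan is to rerun the proof of \Cref{T:Smooth_Cross_C} essentially verbatim, using the projection $P:\cplx^2\to\cplx$, $P(z,w)=z$, as the auxiliary holomorphic map; the one place where the singularities of $E_h$ could matter is that the characteristic foliation machinery needs a smooth hypersurface, and this is exactly what the hypothesis $K\subset E^*_h\times\cplx$ secures. Concretely I would set $G:=\cplx\setminus\{z:h(z)=0,\ dh(z)=0\}$, work inside the domain $\Omega:=G\times\cplx$, and put $\rho(x,y,u,v):=h(x,y)$, so that $M:=E^*_h\times\cplx=\{z\in\Omega:\rho(z)=0\}$ with $\nabla\rho=(h_x,h_y,0,0)\ne0$ on $M$ (because $dh\ne0$ on $E^*_h$). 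Thus $M$ is a genuine $3$-dimensional submanifold of $\Omega$, and \Cref{L:Charc_Foliation} produces a $\smoo^1$ characteristic foliation on $K$, realized by a nowhere vanishing vector field on a slightly enlarged totally real disc $\triangle\subset M$ containing $K$.

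The first step is to show that every leaf of this foliation lies in a single fibre of $P$. This is the same linear-algebra computation as in \Cref{T:Smooth_Cross_C}: for an integral curve $\gamma=(x(t),y(t),u(t),v(t))$, the relations $\langle\gamma',\nabla\rho\rangle=\langle i\gamma',\nabla\rho\rangle=0$ reduce to $\left(\begin{smallmatrix}h_x&h_y\\ h_y&-h_x\end{smallmatrix}\right)\left(\begin{smallmatrix}x'\\y'\end{smallmatrix}\right)=0$, and since $h_x^2+h_y^2\ne0$ on $M$ one gets $x'\equiv y'\equiv0$, i.e.\ $\gamma\subset P^{-1}\{c\}$ for some $c\in E^*_h$. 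The second step is to show each fibre $K_c:=P^{-1}\{c\}\cap K$ is polynomially convex. Here $V_c=\{z=c\}$ is a one-dimensional Runge variety, so if $\check{H}^{1}(K_c;\mathbb{Z})=0$ then $K_c$ is simply-coconnected, hence rationally convex by \Cref{R:R_Convx_Runge_Varty}, hence polynomially convex in dimension one by \Cref{R:R-conx_PolyConvx}, hence polynomially convex; and $\check{H}^{1}(K_c;\mathbb{Z})\ne0$ is excluded exactly as in \Cref{T:Polynomial} (Step I, Case II) via the Poincar\'e--Bendixson theorem (\Cref{R:Poincare-Bendixson}, \Cref{R:Poincare}), since every leaf on $\triangle$ is an arc ending on $\partial\triangle$, while a nonzero first cohomology of $K_c$ would disconnect $\triangle$ and block such an arc.

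Finally I would glue the fibres. Since $K$ is a disc, $Y:=P(K)$ is a compact \emph{connected} subset of the smooth $1$-manifold $E^*_h$, hence has empty interior in $\cplx$ and satisfies $\partial\widehat{Y}=Y$ (splitting into the cases $\cplx\setminus Y$ connected or not, just as in the proof of \Cref{T:smooth_cross-cgeneral}); by \Cref{R:BdryPt_PeakPt} every point of $Y$ is then a peak point for $\poly(\widehat Y)$, and hence for $\poly(Y)$. Because $P$ is a polynomial, \Cref{R:Hull_fiber} gives $P^{-1}\{c\}\cap\widehat K=\widehat{P^{-1}\{c\}\cap K}=K_c\subset K$ for every $c\in Y$, so $P(\widehat K\setminus K)\cap P(K)=\emptyset$; and as a disc is contractible, hence simply-coconnected, \Cref{R:Hull_Img X} applied with the entire map $P$ yields $\widehat K=K$. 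I do not expect a serious obstacle here: the only points needing care are the availability of the enlarged disc $\triangle\subset M$ for the Poincar\'e--Bendixson step and the passage of the peak-point property from $\widehat Y$ to $Y$, both of which already appear in the earlier proofs, and the possible points of $(E_h)_{sing}$ are harmless precisely because $K$ is confined to $E^*_h\times\cplx$ while the map $P$ used in \Cref{R:Hull_Img X} is globally entire.
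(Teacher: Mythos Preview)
Your proposal is correct and coincides with the paper's approach: the paper's own proof is the single line ``Proof follows from \Cref{T:Smooth_Cross_C},'' and what you have written is precisely an unpacking of that proof in the present setting, with the key observation that the projection $P(z,w)=z$ is entire so that \Cref{R:Hull_Img X} applies without any hypothesis on $\widehat K\cap\bigl((E_h)_{sing}\times\cplx\bigr)$. Your gluing step (the peak-point argument for $Y=P(K)$ via connectedness, borrowed from the proof of \Cref{T:smooth_cross-cgeneral}) is in fact slightly more careful than the corresponding line in the paper's proof of \Cref{T:Smooth_Cross_C}.
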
	
	
	\begin{proof}
		Proof follows from \Cref{T:Smooth_Cross_C}.
	\end{proof}
	
	\begin{remark}\label{T: normalform}
		Let $M:=\{z\in \cplx^2:\rho(z)=0\}$ be a singular Levi-flat quadratic real hypersurface in $\cplx^2$ with the following normal form (see \cite{DanielGong1999}):
		\begin{enumerate}[(i)]
			\item $\rho(z)=(z^2_1+2z_1\bar{z}_1+\bar{z}^2_1)$
			\item $\rho(z)=(z^2_1+2\lambda z_1\bar{z}_1+\bar{z}_1)~$ $\lambda\in (0,1)$
			\item $\rho(z)=\rl z_1\rl z_2$
		\end{enumerate}
		In the first case, $M$ is non singular. In the second case the singular set is $\{o\}\times\cplx$ and in the third case the singular set is $\{(iy_1,iy_2)\in\cplx^2:\;\; y_j\in\rea\}$. If a totally real disc lie in $M^*$, the non-singular part of $M$, then, by \Cref{T:singular_cross-cgeneral}, it is polynomially convex.
	\end{remark}

	\section{Examples and concluding remarks}
	In this section we provide some examples supporting the theorems in this paper. 
	\begin{example}\label{Exam:Ellipse}
		Let $E_{a,b}:=\{z\in \cplx:{a^2}{x^2}+{b^2}{y^2}=1\}$ $(a,b\in \mathbb{R}),$ and $M=E_{a,b}\times\mathbb{C}$ be a hypersurface $\mathbb{C}^2$. Then, by Theorem~\ref{T:Smooth_Cross_C}, every totally real smooth disk in $M$ is polynomially convex.
	\end{example}
	
	\begin{example}
		Let us consider $M:=\{(z_1,z_2)\in \cplx^2:|z_2+\phi(z_1)|=1\},$ $K:=\{(z_1,z_2)\in \cplx^2:z_2=e^{i(\rl z_1)^2}-\phi(z_1),|z_1|\le 1\}\subset M,$ where $\phi$ is a polynomial in $z_1.$ Using \Cref{T:Poly_Cnvx_OneDimntion}, we can show that $K$ is polynomially convex. Note that $K$ is not totally real (at $(0,1-\phi(0)).$ $K$ is the image of the closed unit disc under the smooth map $\xi\to (\xi,e^{i(\rl \xi)^2}-\phi(\xi)),$ which is essentially the graph of $e^{i(\rl z_1)^2}-\phi(z_1)$ over the closed unit disc. Hence, $K$ is simply-coconnected (in fact, contractible). It remains to show that each fiber $P^{-1}\{c\}\cap K$ is polynomially convex, where $P(z_1,z_2)=z_2+\phi(z_1)$. We compute:
		\begin{align*}
			P^{-1}\{c\}\cap K=&\{(z_1,z_2)\in \cplx^2:z_2+\phi(z_1)=c, w=e^{i(\rl z_1)^2}-\phi(z_1),|z_1|\le 1\}\\
			=& \{(z_1,z_2)\in \cplx^2:z_2+\phi(z_1)=c, e^{i(\rl z_1)^2}=c=e^{it},|z_1|\le 1, t\in [0,2\pi]\}\\
			=& \{(z_1,z_2)\in \cplx^2:z_2+\phi(z_1)=c, \rl z_1=\pm t,|z_1|\le 1\}\\
			=& \Gr_{h}(L_{1}\cup L_{2}),
		\end{align*}
		where $h=c-\phi(z_1),$ $L_{1}\cup L_{2}$ is polynomially convex. Therefore, being a graph of a polynomial over polynomially convex set, $P^{-1}\{c\}\cap K$ is polynomially convex.
	\end{example}

	\begin{example}
		Let $P(z_1,z_2)=\frac{1}{2}(z_1+z_2)$ and $M:=\{(z_1,z_2)\in \cplx^2:|P(z_1,z_2)|=1\}.$ Let us consider $\phi:\overline{\mathbb{D}}\to \cplx^2$ be a map defined by $\phi(t,s)=(\cos{t}+t,\sin{s}+s,\cos{t}-t,\sin{s}-s).$ Then $\phi$ is a diffeomorphism and hence $K:=\phi(\overline{\mathbb{D}})$ is a simply-coconnected compact subset of $M.$ Note that $P^{-1}\{c\}\cap K=\{(c+\overline{\xi},c-\overline{\xi}):|\xi|\le 1\}$ is polynomially convex. Using \Cref{T:Poly_Cnvx_OneDimntion}, we conclude that $K$ is polynomially convex.
	\end{example}

	\begin{example}
		Let $h:\cplx\to \mathbb{R}$ be defined by $h(x,y)=x^2+y^2-1$ and $\phi:\overline{\mathbb{D}}\to \mathbb{C}^2$ be defined by $\phi(t,s)=(\cos{t},\sin{t},(t-3)^3,(s-2)^3).$ Then $\phi$ is a diffeomorphism. Hence $K:=\phi(\overline{\mathbb{D}})$ is a simply-coconnected compact subset of $E_{h}\times \cplx.$ Let $P(z,w)=z$ be the projection map. Then $P^{-1}\{c\}\cap K=\{(z,w): z=c,w=((t-3)^3,(s-2)^3)), t^2+s^2\le 1\}$ is polynomially convex for all $c\in \cplx.$ Using \Cref{T:smooth_cross-cgeneral}, we conclude that $K$ is polynomially convex.
	\end{example}
	
We now make some concluding remarks. The hypersurfaces considered in this paper are all globally Levi-flat, i.e. pseudoconvex from both sides. In all the non-singular hypersurfaces considered here, any totally real disc lying in them turned out to be polynomially convex. This encourages us to make our first conjecture: 
\smallskip

\noindent
\begin{conjecture}
	Any totally real disc lying in a non-singular Levi-flat hypersurface in $\cplx^2$ is polynomially convex.
\end{conjecture}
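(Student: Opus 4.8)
\smallskip

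\noindent\textbf{A proof proposal.}
The plan is to push the two-step scheme behind \Cref{T:Polynomial} and \Cref{T:Smooth_Cross_C} as far as it will go, and to pinpoint where a genuinely new idea is required. Let $M\subset\cplx^2$ be a nonsingular Levi-flat hypersurface and $K$ a $\smoo^2$-smooth totally real disc in $M$. Since $M$ is nonsingular and two-sided, it has a $\smoo^2$ defining function $\rho$ with $d\rho\ne 0$ near $K$, so \Cref{L:Charc_Foliation} produces a $\smoo^1$ characteristic foliation on a slightly larger totally real disc $\triangle=\phi(U)\supset K$, the leaf through $x$ being tangent to $T_xK\cap T_x^{\cplx}M$. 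Because $M$ is Levi-flat, the complex tangent distribution $x\mapsto T_x^{\cplx}M$ is integrable, and its integral manifolds are exactly the leaves of the Levi foliation of $M$; hence every characteristic leaf $\gamma$, which satisfies $\gamma'(t)\in T_{\gamma(t)}^{\cplx}M$, lies inside a single Levi leaf of $M$ — this is the replacement for \Cref{L:Func_Constant_Leaf}. Finally, the Poincar\'{e}--Bendixson argument from Case II of Step I of \Cref{T:Polynomial} carries over verbatim: a closed characteristic leaf would force a critical point of a nowhere-vanishing vector field on $U$, which is impossible, so each characteristic leaf is an arc joining two points of $\bdy\triangle$. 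Consequently $K$ decomposes into its fibers $K_L=K\cap L$, with $L$ running over the Levi leaves meeting $K$, and each $K_L$ is a compact union of such $\smoo^1$ arcs.

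The second step is to show that each fiber $K_L$ is polynomially convex and then to glue. For the fibers, $K_L$ is a compact one-dimensional subset of the Riemann surface $L$; if $\check{H}^{1}(K_L;\zahl)\ne 0$ the Poincar\'{e}--Bendixson argument again gives a contradiction (as in \Cref{T:Polynomial}), so $\check{H}^{1}(K_L;\zahl)=0$ and $K_L$ is simply-coconnected. If one knew that $L$, or at least a relatively compact piece of it containing $K_L$, were a Runge variety in $\cplx^2$, then \Cref{R:R_Convx_Runge_Varty} would give rational convexity of $K_L$, \Cref{R:R-conx_PolyConvx} would give polynomial convexity in dimension one, and the Runge property would upgrade this to honest polynomial convexity of $K_L$, exactly as in Step I of \Cref{T:Polynomial}. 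For the gluing, one can invoke the local normal form of nonsingular Levi-flat hypersurfaces in $\cplx^2$ — every point has a neighbourhood biholomorphic to a ball in which $M$ becomes $\rea\times\cplx$ — so that \Cref{Rmk:RXC} applies in each such chart and shows that every sufficiently small sub-piece of $K$, and of each fiber $K_L$, is polynomially convex; one would then try to assemble global polynomial convexity of $K$ from a locally finite cover by such pieces, an exhaustion of $\widehat{K}$, and the fact that $K$ is contractible, hence simply-coconnected.

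The hard part — and the reason this remains a conjecture — is that a general nonsingular Levi-flat hypersurface in $\cplx^2$ carries no global holomorphic first integral for its Levi foliation: it need not be of the form $\{|h|=1\}$ or $\{\rl g=0\}$ for any holomorphic $h$ or $g$, and its leaves need not be the level sets of a single-valued holomorphic function. Hence there is no analogue of the function $h$ with $h(K)\subset\bdy\mathbb{D}$ that powered Step II of \Cref{T:Polynomial}: one has no holomorphic $\phi$ near $\widehat{K}$ with $\phi(K)\cap\phi(\widehat{K}\setminus K)=\emptyset$, so \Cref{R:Hull_Img X}, \Cref{R:Hull_fiber} and \Cref{L:Gorai} cannot be applied directly, and the Levi leaves cannot be assumed Runge. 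Patching polynomial convexity from the local pieces is therefore the genuine obstacle, since local polynomial convexity together with $\check{H}^{1}(K;\zahl)=0$ does not in general force global polynomial convexity. I would expect that closing the gap requires either a global cohomological hypothesis on $M$ yielding a single-valued holomorphic first integral — which would essentially recover the present theorems — or a potential-theoretic argument, such as a Duval--Sibony-type construction of a K\"{a}hler form for which $K$ is isotropic, giving rational convexity of $K$ outright, together with a new device bridging rational to polynomial convexity in this setting.
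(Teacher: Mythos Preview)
The statement you were given is a \emph{conjecture}, not a theorem: the paper offers no proof of it whatsoever, only the remark that the special cases handled by \Cref{T:Polynomial}, \Cref{T:Smooth_Cross_C} and their corollaries motivate the guess. So there is nothing to compare your proposal against.

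That said, your write-up is not a proof either, and you say so explicitly. Your diagnosis of the obstruction is accurate and matches the paper's implicit reasoning: every result actually proved in the paper relies on a global holomorphic (or at least pluriharmonic) first integral $h$ for the Levi foliation, which supplies (i) Runge one-dimensional varieties $h^{-1}\{c\}$ containing the fibers $K_c$, and (ii) a map $h$ with $h(K)\subset\bdy\dsc$ so that \Cref{R:Hull_fiber} and \Cref{R:Hull_Img X} can be invoked. A generic nonsingular Levi-flat hypersurface has no such first integral, so both mechanisms collapse, and neither the Runge property of the leaves nor the fiberwise-to-global passage is available. Your observation that the local normal form $\rea\times\cplx$ plus \Cref{Rmk:RXC} gives local polynomial convexity of $K$ is correct but, as you note, local polynomial convexity together with $\check{H}^1(K;\zahl)=0$ does not imply global polynomial convexity, so the patching step is a genuine gap rather than a routine exercise. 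In short: your proposal is not a proof, the paper has none either, and your identification of where the argument breaks is on target.
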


In view of \Cref{Cor:SinLeviFlat_PolyCnvx}, \Cref{T:Singular_Hypersurface} and \Cref{T: normalform}
we infer that the totally real discs lying in the Levi-flat quadrics, which are the normal form given by Burns and Gong \cite{DanielGong1999}, are polynomially convex if  
the polynomial hull of a totally real disc does not have an intersection with the singular set. This allows us to make the next conjecture. 
\begin{conjecture}
	Any totally real disc that $K$ lying in a singular Levi-flat hypersurface is polynomially convex if $\hull{K}$ does not have a non-empty intersection with the singular set.
\end{conjecture}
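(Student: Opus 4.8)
The plan is to obtain this statement from a non-singular version — essentially the first conjecture above — together with a localization that exploits the hypothesis $\hull{K}\cap M_{sing}=\emptyset$. Write $M_{sing}$ for the singular set of the Levi-flat hypersurface $M$ and $M^{*}=M\setminus M_{sing}$ for its regular part, which is a genuine non-singular Levi-flat hypersurface. Since $\hull{K}$ is compact and disjoint from $M_{sing}$, there is a bounded Runge domain $\Omega$ with $\hull{K}\Subset\Omega$ and $\overline{\Omega}\cap M_{sing}=\emptyset$; then $K\subset M^{*}\cap\Omega$, and it suffices to show that every totally real disc in a non-singular Levi-flat hypersurface is polynomially convex, the extra information $\hull{K}\Subset\Omega$ being automatic here. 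Thus the singular case is reduced to the non-singular one.

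For the non-singular case I would follow the template of the proofs of \Cref{T:Polynomial} and \Cref{T:Singular_Levi_flat}. By \Cref{L:Charc_Foliation} the disc $K$ carries a $\smoo^{1}$ characteristic foliation whose leaves, as in \Cref{L:Func_Constant_Leaf}, lie inside the leaves of the Levi foliation of $M^{*}$ — each of which is a complex curve. The ingredient missing, compared with the cases already settled, is a holomorphic \emph{first integral}: in \Cref{T:Polynomial} the Levi leaves are the level sets of the holomorphic $h$ with $|h|\equiv 1$, in \Cref{T:Smooth_Cross_C} they are the fibres of a coordinate projection, and in \Cref{T:Singular_Hypersurface} the level sets of $z/w$. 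On a general non-singular Levi-flat hypersurface no single holomorphic function cuts out the Levi leaves; the plan is to build one on a neighbourhood of $\hull{K}$ in $\cplx^{2}$ by trivializing the transverse structure of the Levi foliation near the disc $K$ (using that $K$ is simply connected), producing a holomorphic submersion $\pi$ whose fibres contain the Levi leaves through $K$. The hypothesis $\hull{K}\cap M_{sing}=\emptyset$ is what keeps $d\pi\neq 0$ over $\hull{K}$.

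Granting such a $\pi$, the argument closes exactly as in the paper. The fibres $\pi^{-1}\{c\}\cap K$ lie in one-dimensional Runge varieties; the topologically bad case $\check{H}^{1}(\pi^{-1}\{c\}\cap K;\mathbb{Z})\neq 0$ is ruled out by the Poincar\'{e}-Bendixson argument (\Cref{R:Poincare-Bendixson} and \Cref{R:Poincare}) exactly as in Step~I, Case~II of the proof of \Cref{T:Polynomial}, and in the remaining case \Cref{R:R_Convx_Runge_Varty} together with \Cref{R:R-conx_PolyConvx} gives that each fibre is polynomially convex. Since $\pi(K)$ is the continuous image of a disc under a map with one-dimensional fibres, it has empty interior and connected complement, hence is polynomially convex, so by \Cref{R:BdryPt_PeakPt} every point of $\pi(K)$ is a peak point for $\poly(\pi(K))$; \Cref{R:Hull_fiber} then gives $\pi^{-1}\{c\}\cap\hull{K}=\widehat{\pi^{-1}\{c\}\cap K}$ for every $c$, whence $\pi(K)\cap\pi(\hull{K}\setminus K)=\emptyset$, and \Cref{R:Hull_Img X} (or \Cref{L:Gorai}) yields that $K$ is polynomially convex.

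The main obstacle is the construction and control of the holomorphic first integral $\pi$: triviality of the holonomy along loops in $K$ only produces a first integral on a thin tube around $K$ inside $M^{*}$, whereas \Cref{R:Hull_fiber} requires $\pi$ holomorphic on a full neighbourhood of $\hull{K}$ in $\cplx^{2}$, and a Levi leaf meeting $K$ may wander far from $K$ before returning, so that $\pi^{-1}\{c\}\cap K$ and $\pi^{-1}\{c\}\cap\hull{K}$ need not be connected and the fibrewise Poincar\'{e}-Bendixson bookkeeping becomes delicate; verifying that $\pi(K)$ really is a peak-point set for its own polynomial hull — automatic in the settled cases because there $h(K)\subset\bdy\dsc$ — is a second, smaller point. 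I expect that a complete resolution of these two points is tantamount to proving the non-singular conjecture, after which the localization in the first paragraph settles the singular one.
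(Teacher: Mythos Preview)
This statement is presented in the paper as a \emph{conjecture}, not a theorem: the authors offer no proof and pose it explicitly as an open problem suggested by the special cases in \Cref{Cor:SinLeviFlat_PolyCnvx}, \Cref{T:Singular_Hypersurface}, and \Cref{T: normalform}. There is therefore no proof in the paper to compare your proposal against.

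As for the proposal itself, you correctly observe that the hypothesis $\hull{K}\cap M_{sing}=\emptyset$ lets one localize to the regular part $M^{*}$ inside a Runge domain, and that the machinery of \Cref{T:Polynomial} and \Cref{T:Singular_Levi_flat} would then run verbatim \emph{provided} one has a holomorphic first integral $\pi$ for the Levi foliation, defined and submersive on a full neighbourhood of $\hull{K}$ in $\cplx^{2}$. You also correctly identify the construction of $\pi$ as the essential missing step and are candid that you do not know how to supply it. That is indeed the genuine gap: a non-singular Levi-flat hypersurface need not admit a holomorphic first integral near an arbitrary compact piece---the Levi foliation can have nontrivial holonomy or dense leaves---and the simple connectivity of $K$ only trivializes holonomy along loops in $K$, yielding at best a first integral on a tube about $K$ inside $M^{*}$, not on a $\cplx^{2}$-neighbourhood of $\hull{K}$ as \Cref{R:Hull_fiber} and \Cref{R:Hull_Img X} require. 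Your reduction of the singular conjecture to the non-singular one is therefore sound in spirit, but since the non-singular statement is itself the paper's first open conjecture, what you have written is a strategic outline with an honest accounting of its obstacles, not a proof.
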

\noindent In the third hypersurface of \Cref{T: normalform} the singular set is of the form 
$S:=\{(iy_1,iy_2)\in\cplx^2:\;\; y_j\in\rea\}$, which is a totally real subspace of maximal dimension in $\cplx^2$. Hence, any compact subset lying in $S$ is polynomially convex. A totally real disc can lie there. Hence, we ask the following question:
\begin{question}
	Characterize the totally real discs in singular Levi-flat hypersurfaces in $\cplx^2$ which are polynomially convex.
\end{question}

\medskip

\noindent {\bf Acknowledgements.} The first named author is supported partially by a MATRICS Research Grant (MTR/2017/000974) and by a Core Research Grant (CRG/2022/003560) of SERB, Dept. of Science and Technology, Govt. of India. The work of the second named author is partially supported by an INSPIRE
Fellowship (IF 160487), Dept. of Science and Technology, Govt. of India, and by a research grant of SERB (30121582), Dept. of Science and Technology, Govt. of India.

	

\end{document}